\documentclass[11pt]{article}
\usepackage{url}
\usepackage{verbatim}
\usepackage[titletoc]{appendix}
\usepackage{graphicx}
\usepackage{stmaryrd}
\usepackage{amsmath}
\usepackage{amsthm}
\usepackage{amsfonts}
\usepackage{graphicx}
\usepackage{nicefrac}
\usepackage{longtable}
\usepackage{color}
\usepackage{graphicx, amssymb,graphics}
\usepackage{epstopdf}
\usepackage{float}
\usepackage{bm}
\usepackage{enumitem}

\usepackage{multirow,booktabs}
\usepackage{algorithmic}
\usepackage{subfigure}

\allowdisplaybreaks
\renewcommand \d {\mathrm{d}}

\newtheorem{theorem}{Theorem}[section]

\newtheorem{lemma}{Lemma}[section]

\def\b[#1]{\boldsymbol{#1}}

\newcommand\be {\begin{equation}}
\newcommand\ee {\end{equation}}

\title{Bound preserving and mass conservative methods for the nonlocal Cahn-Hilliard equation with the logarithmic Flory-Huggins potential}

\author{
Yingying Wang\thanks{School of Mathematical Sciences, Beijing Normal University, Beijing 100875, China. Email: yywang@mail.bnu.edu.cn}
\and
Xiao Li\thanks{Key Laboratory of Mathematics and Complex Systems, Ministry of Education and School of Mathematical Sciences, Beijing Normal University, Beijing 100875, China. Email: lixiao@bnu.edu.cn}
\and
Zhengru Zhang\thanks{Key Laboratory of Mathematics and Complex Systems, Ministry of Education and School of Mathematical Sciences, Beijing Normal University, Beijing 100875, China. Email: zrzhang@bnu.edu.cn}
}

\date{}

\begin{document}

\maketitle
\numberwithin{equation}{section}

\begin{abstract}
It is well known that the exponential time differencing (ETD) method has been successfully applied to the classic Cahn-Hilliard equation with double well potential. However, this numerical method can not be extended to the Cahn-Hilliard equation with Flory-Huggins potential directly due to the fact that the the numerical solution may go beyond the physical interval which leads the non-physical solution. 
In this paper, we develop and analyze first- and second-order numerical schemes for the nonlocal Cahn-Hilliard equation with the classic Flory-Huggins energy potential.
In more detail, the ETD method is firstly used to obtain the prediction solution, and then this prediction solution is corrected by the projection method to avoid non-physical solution. The proposed method is shown to preserve bound and mass conservation in discrete settings. In addition, error estimates for the numerical solution are rigorously obtained for both schemes. Extensive numerical tests and comparisons are conducted to demonstrate the performance of the proposed schemes.
\end{abstract}
\bigskip
\noindent
{\bf Keywords.}
nonlocal Cahn-Hilliard equation, projection method, mass conservation, bound preserving, error estimates

\medskip
\section{Introduction}
Let us consider the nonlocal Cahn-Hilliard (NCH) equation
\begin{subequations} \label{nonlocal_eq}
\begin{align}
u_t & = \Delta \mu\\
\mu & =\frac{\theta}{2} \ln(1+u)-\frac{\theta}{2} \ln(1-u) - \theta_c u - \epsilon^2 \Delta u + \sigma (-\Delta)^{-1}(u-\overline{u} ) , 
\end{align}
\end{subequations}
for $t>0$ and $\bm{x}\in\Omega$. Here, $u=u(\bm{x},t)$ is the unknown function, and $\Omega\subset\mathbb{R}^d$ is the spatial domain. 
The parameter $\epsilon>0$ is the capillary coefficient, which is related to the interfacial thickness. The constants $\theta$ and $\theta_c$, which are associated with the mixture temperature, satisfy $0<\theta<\theta_c$.
The parameter $\sigma>0$ denotes the strength of the nonlocal interaction. We also impose the initial data
$u(\bm{x},0)=u^0 $
and the periodic boundary condition. 

The Eq.\eqref{nonlocal_eq} usually can be regarded as the $H^{-1}$ gradient flow with respect to the energy functional with logarithmic potential:
\begin{equation}\label{energy_log}
E(u)=\int_{\Omega}\left(\frac{\theta}{2} [(1+u)\ln (1+u)+(1-u)\ln(1-u)]-\frac{\theta_c}{2}u^2+\frac{\epsilon^2}{2} |\nabla u|^2 +\frac{\sigma}{2} |(-\Delta)^{-\frac{1}{2}}(u-\overline{u})|^2 \right) \d \bm{x},
\end{equation}
Due to the gradient structure of \eqref{nonlocal_eq}, the following energy dissipation laws formally hold:
\begin{equation*}
\frac{\d}{\d t} E(u(t))=-\int_{\Omega}|\nabla \mu |^2\d\bm{x}\leq 0. 
\end{equation*}
The theoretical solution are expected to remain in the interval $(-1,1)$ in a point-wise sense and preserve the mass conservation, i.e., $\int_{\Omega} u \,\d\bm{x}=\int_{\Omega} u^0 \,\d\bm{x}$ for all $t>0$. Especially, the major challenge lies in the preservation of the bound preserving property. 
To circumvent this issue, the logarithmic nonlinear term is often approximated by regularized terms. The resulting chemical potential corresponds to the classical diblock copolymer model originally proposed by Ohta and Kawasaki \cite{OK1986}. There are many excellent works for Ohta-Kawasaki model. Nishiura and Ohnishi \cite{framework} established the mathematical framework.  The implicit midpoint spectral method \cite{linear2014} for the equilibrium and the invariant energy quadratization (IEQ) method \cite{IEQ_intro} were adopted to study this diblock copolymer model; see more related references \cite{IEQ_intro,fifth,Xu2020,dcds,ETD_ref,nonlocal_Cahn,nCH}. 
However, the exact solution may still go beyond the physically meaningful interval $(-1,1)$  with the regularized nonlinear term \cite{CH_PDE}. Therefore, the free energy with the logarithmic potential is often considered to be more physically.

We were drawn to the NCH equation \eqref{nonlocal_eq} not only because of its physical significance, particularly in the self-assembly of diblock copolymers, 
but also because of its broad applications across many scientific fields, including physics, materials science, and financial mathematics. 
For example, \cite{2024opt} developed an optimal control framework for a nonlocal tumor model, which can be used to optimize chemotherapy doses and can also be applied to cell membrane electroporation to improve related device parameters \cite{2023pha}.
In image processing, the model can be used for image inpainting \cite{2018CH}. 
In finance, nonlocal phase field model can be applied to solve local risk minimization pricing and hedging problems under semi-Markov regime switching mechanism \cite{2016sys}.

There are many excellent works on both theoretical analysis and numerical method for NCH equation \eqref{nonlocal_eq}. The existence of weak solutions and their uniqueness, and the existence of the connected global attractor were proven in \cite{existence} and \cite{uniqueness} (see also \cite{both}). Moreover, well-posedness and regularity of weak solutions were studied in \cite{separation}, namely, in this work the authors established the validity of the strict separation property in two dimensions for NCH equation \eqref{nonlocal_eq}. This means that if the initial state is  not a pure state (i.e. $u\equiv 1$ or $u\equiv -1$ ), then the corresponding solution stays away from the pure states in finite time, uniformly with respect to the initial data.  In \cite{3D_separation}, the authors established the strict separation property in three dimensions. 

On the numerical discretization for the NCH equation \eqref{nonlocal_eq}, the bound preserving property is very challenging. The traditional approach to enforce $u\in (-1,1)$ is via regularization of the
nonlinearity or using the technique of sub-differential operators. For example, the second order accurate, unconditionally uniquely, solvable and unconditionally energy stable convex splitting schemes were proposed \cite{convex1_wang} for NCH equation by regularizing  the logarithmic potential. The first and second order linear and unconditional energy stable schemes based on the IEQ method were developed to the Cahn-Hilliard equation \cite{IEQ_flory}. In this article \cite{IEQ_flory}, the author regularized the logarithmic potential by a $\mathcal{C}^2$ piecewise function. Besides, Copetti and Elliott \cite{CH_element}
considered a fully implicit Euler scheme  to the Cahn-Hilliard equation with the finite element
approximation in space. It was shown that under the condition that the time step $\tau$  is sufficiently small, then there exists a unique
numerical solution for the implicit Euler discretization.
Wang et all \cite{convex2_wang} designed the first-order scheme based on the convex-concave decomposition for Cahn-Hilliard equation with the Flory-Huggins functional. By using a variational idea taking
advantage of the singular nature of the nonlinearity, the numerical solution constructed in \cite{convex2_wang} can
be guaranteed to lie in the interval $(-1,1)$ in each iteration. But this numerical scheme based on the convex splitting method is the nonlinear scheme which leads to increased computational cost.

Recently, Shen et al. \cite{lagrange1,lagrange2} constructed some positivity/ bound-preserving and mass conservative  schemes for semilinear and quasi-linear  parabolic equations by adopting the Lagrange multiplier approach and predictor-corrector method, where  the generic semi-implicit scheme is applied in the prediction step, and then at the correction step structure-preserving is forced via Lagrange multipliers with negligible computational cost.  This method has already been successfully applied to many models, such as Poisson-Nernst-Planck
(PNP) equation \cite{PNP}, Keller-Segel equation \cite{KS},  nonlinear convection diffusion equation \cite{convection}. It is worth mentioning that the cut-off approach \cite{cut_off} can be interpreted as a special case of this prediction-correction method. It provides it provides a different justification for the cut-off approach, moreover modifies the cut-off approach so that it becomes mass conserving scheme. 

It is well known that the exponential time differencing (ETD)-based numerical approach is widely used to solve nonlinear parabolic PDEs, in which an exact integration of the linear and positive definite part of the PDE is used, combined with certain explicit approximations to the temporal integral of the nonlinear and concave terms. The ETD scheme was systematically studied in \cite{first_ETD} and then further developed by Cox and Matthews with applications on stiff system \cite{Cox}, where higher-order multistep and Runge-Kutta versions of these schemes were described. Hochbruck and Ostermann provided several nice reviews of the ETD Runge-Kutta methods \cite{RK} and ETD multistep methods \cite{multiple}. ETD schemes usually perform as efficiently as an explicit scheme since the operator exponentials can be often implemented by fast algorithms in regular domains, which leads to many successful applications to simulations of coarsening dynamics, see, e.g. \cite{MBP_ETD,epitaxial,PFC,Rev}. 
However, to the best of our knowledge, there is few work on the ETD schemes for the NCH equation with logarithmic potential, due to the fact that the numerical solution may go beyond the interval $(-1,1)$. 

The main goal of paper is to develop first- and second-order bound preserving and mass conservative schemes for NCH equation \eqref{nonlocal_eq} by using prediction-correction approach and ETD schemes. Specially, at the prediction step, we obtain the prediction solution at each time step by using the ETD schemes which may go beyond the bound, and then, we project the prediction solution into the bound preserving and mass conservation space at the correction step which is equivalent to solve a constrained minimization problem. This constrained minimization problem is transformed into the system with  the Lagrange multiplier and KKT conditions which can be computed by a suitable iteration procedure. To the best of our knowledge, this is the first exploration in the direction of designing the ETD-based method for NCH equation with singular potential. 

The rest of this paper is organized as follows. Section \ref{space} is devoted to spatial discretization of the Eq. \eqref{nonlocal_eq} and some preliminary lemmas to be used in latter sections. Then based on Lagrange multiplier approach, the first- and second-order bound preserving and mass conservative ETD schemes, denoted as P-ETD schemes are developed in Section \ref{schemes}, together with some properties of matrix functions. The fully discrete convergence analysis for the P-ETD schemes are presented in Section \ref{convergence}. In Section \ref{experiments}, extensive numerical tests and comparisons are carried out to illustrate the accuracy and effectiveness of the proposed schemes. Some concluding remarks are given in Section \ref{conclusion}.

\section{Spatial discretization}\label{space}
For simplicity, throughout this paper, we consider the two-dimensional square domain $\Omega=(0,L)\times (0,L)$ for Eq. \eqref{nonlocal_eq} with periodic boundary conditions. Note that extending the analysis to three-dimensional problems is straightforward. In this section, we first introduce some notations related to the spatial discretization by central finite difference.

Given a positive integer $M$, we set $h=L/M$ to be the size of the uniform mesh partitioning $\overline{\Omega}$. Denote by $\Omega_h$ the set of mesh points $(x_i,y_j)=(ih,jh)$, $1\leq i,j\leq M$. For a grid function $v$ defined on $\Omega_h$, we write $v_{ij}=v(x_i,y_j)$ for simplicity. Let $\mathcal{M}_h$ be the set of all periodic grid function on $\Omega_h$, i.e.,
\begin{equation*}
\mathcal{M}_h=\{v:\Omega_h\rightarrow \mathbb{R} |v_{i+kM,j+lM}=v_{ij}, k,l\in \mathbb{Z} ,1\leq i,j\leq M\}.
\end{equation*}
The discrete $L^2$ inner product $\langle\cdot ,\cdot \rangle$, discrete $L^2$ norm $\left\lVert \cdot \right\rVert $ and discrete $L^\infty$ norm $\left\lVert\cdot \right\rVert _\infty$ are defined as follows: for any $v,w\in \mathcal{M}_h$,
\begin{equation*}
\langle v,w\rangle=h^2\sum_{i,j=1}^{M}v_{ij}w_{ij},\quad \left\lVert v\right\rVert=\sqrt{\langle v,v\rangle}, \quad \left\lVert v \right\rVert  _\infty =\max_{1\leq i,j\leq M}|v_{ij}| ,
\end{equation*}
and 
\begin{equation*}
\langle \bm{v},\bm{w}\rangle=\langle v^1,w^1\rangle+\langle v^2,w^2 \rangle,\quad \left\lVert \bm{v}\right\rVert=\sqrt{\langle \bm{v},\bm{v}\rangle} 
\end{equation*}
for any $\bm{v}=(v^1,v^2)^T$,   $\bm{w}=(w^1,w^2)^T\in \mathcal{M}_h\times \mathcal{M}_h$. We apply the second-order central finite difference to approximate spatial differentiation operators. For any $v\in \mathcal{M}_h$, the discrete Laplace operator $\Delta_h$ is defined by 
\begin{equation*}
\Delta_h v_{ij}=\frac{1}{h^2}(v_{i+1,j}+v_{i-1,j}+v_{i,j-1}+v_{i,j+1}-4v_{ij}),\quad 1\leq i,j\leq M,
\end{equation*}
and the discrete gradient operator $\nabla _h$ is defined by 
\begin{equation*}
\nabla_h v_{ij}=\left(\frac{v_{i+1,j}-v_{ij}}{h},\frac{v_{i,j+1}-v_{ij}}{h}\right)^T, \quad 1\leq i,j\leq M.
\end{equation*}
By periodic boundary conditions, the discrete summation-by-parts formula is easy to verify:
\begin{equation*}
\langle v,\Delta_h w\rangle =-\langle \nabla _h v, \nabla _h w\rangle =\langle \Delta_h v,w\rangle, \quad \forall v,w\in \mathcal{M}_h.
\end{equation*}
Obviously, $\Delta_h$ is self-adjoint and negative semi-definite. 
For any function $\varphi :\overline{\Omega}\to \mathbb{R}$, we denote by $I^h$ the operator projecting $\varphi$ on the mesh as $(I^h\varphi)_{ij}=\varphi(x_i,y_j)$ for $1\leq i,j\leq M$. For simplicity, we may drop the operator $I^h$ when there is no ambiguity.

Since $\mathcal{M}_h$ is a finite-dimensional linear space, any grid function $v\in \mathcal{M}_h$ and linear operator $\mathcal{Q} :\mathcal{M}_h\rightarrow \mathcal{M}_h$ can be regarded as a vector in $\mathbb{R} ^{M^2}$ and a matrix in $\mathbb{R} ^{M^2\times M^2 }$, respectively. We still adopt the notations $\left\lVert \cdot\right\rVert $ and $\left\lVert \cdot\right\rVert _\infty$ to represent the matrix-induced norms consistent with $\left\lVert \cdot\right\rVert $ and $\left\lVert \cdot \right\rVert _\infty$ defined for vectors before, respectively.   

\section{Bound preserving and mass conservative schemes}\label{schemes}

In this section, we aim to construct  bound preserving and mass conservation schemes for NCH equation \eqref{nonlocal_eq} based on the prediction-correction approach. The prediction solution is obtained by the classic ETD schemes but it may lose the bound preservation and mass conservation property. Then, at the correction step, the prediction solution is projected into the set which keep bound and mass conservation.

First, let us introduce the space-discrete version of \eqref{nonlocal_eq}. The space-discrete problem is to find a function $u_h(t)\in {\mathcal{M}_h}$ satisfying 

\begin{equation}\label{space-discrete}
\frac{\mathrm{d} u_h}{\mathrm{d} t}=-\epsilon^2\Delta^2_h u_h-\sigma u_h +\frac{\theta}{2} \Delta_h [\ln(1+u_h)-\ln(1-u_h)] -\theta_c \Delta_h u_h+\sigma \overline{u_h}.
\end{equation}
For the sake of the stability of the time-stepping schemes developed later, we introduce a stabilizing
term $\kappa \Delta_h$ with $\kappa > 0$ and  the system \eqref{space-discrete} can be rewritten as
\begin{equation}\label{space_discrete_L_h}
\frac{\mathrm{d}u_h}{\mathrm{d}t}+L_h u_h =F(u_h),
\end{equation} 
where
\begin{equation}
\begin{aligned}
&L_h:=\epsilon^2\Delta_h^2 -\kappa\Delta_h +\sigma I\\
&F(u_h)=\frac{\theta}{2}\Delta_h[\ln(1+u_h)-\ln(1-u_h)]-\theta_c \Delta_h u_h -\kappa \Delta_h u_h+\sigma \overline{u_h}.
\end{aligned}
\end{equation}

Let us partition the time interval into $\{t_n=n\tau\}_{n\geq0}$ with $\tau>0$ being a uniform time step size. In the remaining part of the paper, we will study time integration schemes for the space-discrete system \eqref{space_discrete_L_h}. For simplicity of representation, we denote by $u^n$ the fully discrete approximate value of $u(t_n)$ or $u_h(t_n)$ with $u$ and $u_h$ denoting the exact solutions to the original continuous problem \eqref{nonlocal_eq} and the space-discrete problem \eqref{space_discrete_L_h}, respectively.


\subsection{First-order ETD scheme with the projection method}

Using the  variation-of-constants formula to  \eqref{space_discrete_L_h} and 
setting $t=t_n$ give us 
\begin{equation}\label{exact_solution_t}
u_h(t_{n+1})=\mathrm{e}^{-\tau L_h}u_h(t_n)+ \int_{0}^{\tau} \mathrm{e}^{-(\tau-s)L_h} F(u_h(t_n+s)) \,\d s. 
\end{equation}
Denote by $u^n$ the numerical approximation of $u_h(t_n)$. Approximating $u_h(t_n+s)$ by $u^n$ and calculating the resulting integral exactly, we generate the prediction solution $\widetilde{u}^{n+1}$ by  the first-order ETD (ETD1) scheme defined as the following form: 
\begin{equation}
\widetilde{u}^{n+1}=\mathrm{e}^{-\tau L_h}u^n+\int_{0}^{\tau} \mathrm{e}^{-(\tau-s)L_h}  F(u^n) \,\d s,
\end{equation}
or equivalently,
\begin{equation}\label{ETD1}
\widetilde{u}^{n+1}=\varphi_0(\tau L_h)+\tau \varphi_1(\tau L_h) F(u^n),
\end{equation}
where 
\begin{equation*}
\varphi_0(a)=\mathrm{e}^{-a}, \quad \varphi_1(a)=\frac{1-\mathrm{e}^{-a}}{a}.
\end{equation*}
Next we introduce the bound preserving and mass conservative projection method for the correction step.
For the  prediction solution $\widetilde{u}^{n+1}\in \mathcal{M}_h$, we aim to project it into the admissible  set 
\begin{equation}
{\mathcal{M}_\delta} =\{v\in {\mathcal{M}_h}| \left\lVert v\right\rVert _\infty \leq 1-\delta, \quad \langle v,1\rangle=\langle \widetilde{u}^{n+1},1\rangle \}.
\end{equation}
Then we adopt the discrete $L^2$ projection to satisfy
\begin{equation}\label{arg}
u^{n+1}=\arg \min \limits_{\phi\in {\mathcal{M}_\delta}}\frac{1}{2} \left\lVert \phi-\widetilde{u}^{n+1}\right\rVert ^2.
\end{equation}
This is a convex minimization problem and can be solved efficiently.
By introducing the Lagrange multipliers $\lambda_h^{n+1}$ and $\xi_h^{n+1}$, we obtain the following  Karush-Kuhn-Tucker (KKT) conditions:
\begin{subequations}\label{corrector}
\begin{align}
&u^{n+1}=\widetilde{u}^{n+1}+\lambda_h^{n+1}g'(u^{n+1})+\xi_h^{n+1},\\
&\lambda_h^{n+1}g(u^{n+1})=0,\quad \lambda_h^{n+1}\geq 0, \quad g(u^{n+1})\geq0,\label{1_4_b}\\
&\langle u^{n+1},1\rangle=\langle \widetilde{u}^{n+1},1\rangle \label{mass_conservation},
\end{align}
\end{subequations}
where $g(u)=(1-\delta)^2-u^2$. $\lambda_h^{n+1}\in \mathcal{M}_h$ is the Lagrange multiplier to preserve bound, and $\xi_h^{n+1}\in\mathbb{R} $ is another Lagrange multiplier to enforce mass conservation.
Note that once $\xi_h^{n+1}$ is known, $u^{n+1}=\{u^{n+1}_{i,j}\}$ and $\lambda_h^{n+1}=\{\lambda_{i,j}^{n+1}\}$ can be solved directly:
\begin{equation}\label{10}
[u^{n+1}_{i,j},\lambda_{i,j}^{n+1}]=\left\{
\begin{aligned}
&[\widetilde{u}_{i,j}^{n+1}+\xi_h^{n+1},0],\quad |\widetilde{u}_{i,j}^{n+1}+\xi_h^{n+1} |\leq  1-\delta,   \\
&[-1+\delta,\frac{-1+\delta -(\widetilde{u}_{i,j}^{n+1}+\xi_h^{n+1})}{ g'(-1+\delta)} ], \quad \widetilde{u}_{i,j}^{n+1}+\xi_h^{n+1} < -1+\delta, \\
&[1-\delta, \frac{1-\delta -(\widetilde{u}_{i,j}^{n+1}+\xi_h^{n+1} )}{ g'(1-\delta)} ], \quad \widetilde{u}_{i,j}^{n+1}+\xi_h^{n+1} > 1-\delta. 
\end{aligned}
\right.
\end{equation}
According to \eqref{10}, the discrete mass conservation property \eqref{mass_conservation} can be rewritten as 
\begin{equation}
\sum_{|\widetilde{u}^{n+1}_{ij}+\xi_h^{n+1}|\leq 1-\delta} \left (\widetilde{u}^{n+1}_{ij}+\xi_h^{n+1}\right )h^2+\sum_{\widetilde{u}^{n+1}_{ij}+\xi_h^{n+1}>1-\delta} (1-\delta)h^2 +\sum_{\widetilde{u}^{n+1}_{ij}+\xi_h^{n+1} <-1+\delta}(-1+\delta) h^2=\langle \widetilde{u}^{n+1},1\rangle.
\end{equation}
Let us set 
\begin{equation}
\begin{aligned}
F_n(\xi):&=\sum_{|\widetilde{u}^{n+1}_{ij}+\xi|\leq 1-\delta} \left (\widetilde{u}^{n+1}_{ij}+\xi^{n+1}\right )h^2+\sum_{\widetilde{u}^{n+1}_{ij}+\xi>1-\delta}(1-\delta)h^2 +\sum_{\widetilde{u}^{n+1}_{ij}+\xi <-1+\delta}(-1+\delta)h^2\\
&\quad -\langle \widetilde{u}^{n+1},1\rangle.
\end{aligned}
\end{equation}
 Then $\xi_h^{n+1}$ is a solution to this nonlinear algebraic equation $F_n(\xi)=0$, which can be solved by secant method:
\begin{equation}
\xi_0=0,\quad \xi_1= \tau,\quad 
\xi_{k+1}=\xi_k-\frac{F_n(\xi_k)(\xi_k-\xi_{k-1})}{F_n(\xi_k)-F_n(\xi_{k-1})} \quad k\geq 1.
\end{equation}
Now, we complete the ETD1 projection (P-ETD1) scheme. 
Next we pay attention to  the mass conservation property for the P-ETD1 scheme.
\begin{lemma}\label{first_order_mass_conservation}
The P-ETD1 scheme \eqref{ETD1} and \eqref{corrector} conserves the mass unconditionally, i.e., for any time step size $\tau>0$, the P-ETD1 scheme satisfies 
\begin{equation}
\langle u^{n+1},1\rangle =\langle u^n,1\rangle.
\end{equation}
\end{lemma}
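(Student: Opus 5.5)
Since the correction step enforces $\langle u^{n+1},1\rangle=\langle \widetilde{u}^{n+1},1\rangle$ through \eqref{mass_conservation}, the statement reduces to showing that the ETD1 predictor conserves mass:
\begin{equation*}
\langle \widetilde{u}^{n+1},1\rangle=\langle u^n,1\rangle .
\end{equation*}
I will read \eqref{ETD1} as $\widetilde{u}^{n+1}=\varphi_0(\tau L_h)u^n+\tau\varphi_1(\tau L_h)F(u^n)$; the factor $u^n$ is evidently dropped from the first term. The plan is to take the discrete inner product of this identity with the constant grid function $1$ and move the matrix functions onto $1$ by self-adjointness.

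First, record the action of the operators on constants. Because of periodicity, $\Delta_h 1=0$, so $L_h 1=\sigma 1$. Moreover $L_h$ is symmetric, being a polynomial in the symmetric $\Delta_h$. Hence any matrix function $\varphi(\tau L_h)$ is symmetric and satisfies $\varphi(\tau L_h)1=\varphi(\tau\sigma)1$. This can be seen from the spectral decomposition, since $1$ is an eigenvector of $L_h$ with eigenvalue $\sigma$, or from the integral representation $\tau\varphi_1(\tau L_h)=\int_0^\tau e^{-(\tau-s)L_h}\,\d s$. It follows that
\begin{equation*}
\langle \varphi_0(\tau L_h)u^n,1\rangle=e^{-\sigma\tau}\langle u^n,1\rangle,\qquad \langle\tau\varphi_1(\tau L_h)F(u^n),1\rangle=\frac{1-e^{-\sigma\tau}}{\sigma}\langle F(u^n),1\rangle .
\end{equation*}

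Second, compute $\langle F(u^n),1\rangle$. By summation by parts, $\langle \Delta_h w,1\rangle=\langle w,\Delta_h 1\rangle=0$, so the three $\Delta_h$ terms in $F$ vanish. The logarithmic term is well defined because $\|u^n\|_\infty\le 1-\delta<1$; for $n=0$ this uses the assumption $\|u^0\|_\infty<1$. The only surviving term is $\sigma\overline{u^n}$. Taking $\overline{v}=\langle v,1\rangle/|\Omega|$ with $|\Omega|=\langle 1,1\rangle=L^2$, it contributes $\langle \sigma\overline{u^n},1\rangle=\sigma\langle u^n,1\rangle$. Combining the two steps gives
\begin{equation*}
\langle\widetilde{u}^{n+1},1\rangle=\bigl(e^{-\sigma\tau}+1-e^{-\sigma\tau}\bigr)\langle u^n,1\rangle=\langle u^n,1\rangle ,
\end{equation*}
for every $\tau>0$, and then \eqref{mass_conservation} yields $\langle u^{n+1},1\rangle=\langle u^n,1\rangle$.

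The main point of care is the interplay between $\sigma I$ inside $L_h$ and $\sigma\overline{u_h}$ inside $F$: they cancel exactly only because $\varphi_0(a)+a\varphi_1(a)=1$ at $a=\sigma\tau$. I also need to check that the discrete mean is normalized so that $\langle\overline{v},1\rangle=\langle v,1\rangle$. Everything else is routine symmetric-matrix calculus.
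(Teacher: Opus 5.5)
Your proof is correct and is essentially the paper's argument. The paper rewrites the predictor as the ODE $W_1'(s)+L_hW_1(s)=F(u^n)$, $W_1(0)=u^n$, tests it with $1$ to get $V'+\sigma V=\sigma\langle u^n,1\rangle$ for $V(s)=\langle W_1(s),1\rangle$, and integrates, which is the integrated form of your identity $\varphi_0(\sigma\tau)+\sigma\tau\varphi_1(\sigma\tau)=1$ built on $L_h1=\sigma 1$ and $\langle F(u^n),1\rangle=\sigma\langle u^n,1\rangle$; both then finish with the mass constraint of the correction step, and your reading of \eqref{ETD1} with the missing factor $u^n$ restored is the intended one.
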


\begin{proof}
In fact, for the P-ETD1 scheme, the prediction step can be expressed as the differential form: to find $W_1(s):[0,\tau]\to \mathcal{M}_h$ satisfying: 
\begin{equation}\label{ODE_ETD1}
\left\{
\begin{aligned}
&\frac{\mathrm{d} W_1(s)}{\mathrm{d}s}+L_h W_1(s)=F(u^n), \quad s\in(0,\tau],\\
&W_1(0)=u^n.
\end{aligned}
\right.
\end{equation}
and then set $\widetilde{u}^{n+1}=W_1(\tau)$.
Taking the  discrete $L^2$ inner product with $1$ on both sides of \eqref{ODE_ETD1}, we obtain 
\begin{equation}
\frac{\mathrm{d}}{\mathrm{d}s} \langle W_1(s),1\rangle +\sigma \langle W_1(s),1\rangle =\sigma \langle u^n,1\rangle.
\end{equation} 
Let $V(s)=\langle W_1(s),1\rangle$, then we have 
\begin{equation}
\frac{\mathrm{d}V(s)}{\mathrm{d}s}+\sigma V(s)=\sigma \langle u^n,1\rangle,
\end{equation}
with initial data $V(0)=\langle u^n,1\rangle $. Multiplying by the exponential term $\mathrm{e}^{s}$ and integrating on the interval $[0,\tau]$, we immediately get 
\begin{equation}
V(\tau)\mathrm{e}^{\tau}-\langle u^n,1\rangle =\langle u^n,1\rangle (\mathrm{e}^{\tau}-1),
\end{equation}
which implies $\langle \widetilde{u}^{n+1},1\rangle=V(\tau) =\langle u^n,1\rangle$. Then combining with discrete mass conservation property \eqref{mass_conservation}, we complete the proof.
\end{proof}

\subsection{Second-order ETDRK2 scheme with the projection method}
The construction of the second-order projection ETDRK2 ( P-ETDRK2) scheme is similar to that of the P-ETD1 scheme: we first obtain a predicted solution by the ETDRK2 scheme, and then apply a projection step to obtain a corrected solution that preserves bound and mass conservation. The specific form is given as follows:

First, we use the P-ETD1 scheme to obtain the middle solution $u^{n+1}_{\text{mid}}$, i.e.,
\begin{subequations} \label{mid_solution}
\begin{align}
&\widetilde{u}^{n+1}_{\text{mid}}=\varphi_0(\tau L_h)+\tau \varphi_1(\tau L_h) F(u^n),\\
&u^{n+1}_{\text{mid}}=\widetilde{u}^{n+1}_{\text{mid}}+\lambda_{h,1}^{n+1}g'(u_{\text{mid}}^{n+1})+\xi_{h,1}^{n+1},\\
&\lambda_{h,1}^{n+1}g(u_{\text{mid}}^{n+1})=0,\quad \lambda_{h,1}^{n+1}\geq 0, \quad g(u_{\text{mid}}^{n+1})\geq0,\\
&\langle u_{\text{mid}}^{n+1},1\rangle=\langle \widetilde{u}_{\text{mid}}^{n+1},1\rangle,
\end{align}
\end{subequations}
and then approximate $F(u_h(t_n+s))$ in \eqref{exact_solution_t} by a linear interpolation between $F(u^n)$ and $F(u^{n+1}_{\text{mid}})$ to obtain the predicted solution, followed by a projection step, yielding
\begin{subequations}\label{P-ETDRK2}
\begin{align}
&\begin{aligned}
\widetilde{u}^{n+1}&=\mathrm{e}^{-\tau L_h}u^n+\int_{0}^{\tau}  \mathrm{e}^{-(\tau-s)L_h}\left((1-\frac{s}{\tau} )F(u^n)+\frac{s}{\tau} F(u^{n+1}_{\text{mid}})\right)\,\d s \\
&=\varphi_0(\tau L_h)u^n+\tau [(\varphi_1(\tau L_h)-\varphi_2(\tau L_h))F(u^n)+\varphi_2(\tau L_h) F(u^{n+1}_{\text{mid}})]\\
\end{aligned}
\\
&\begin{aligned}
&u^{n+1}=\widetilde{u}^{n+1}+\lambda_{h,2}^{n+1} g'(u^{n+1})+\xi_{h,2}^{n+1}
\end{aligned}
\\
&\begin{aligned}
\lambda_{h,2}^{n+1}g(u^{n+1})=0,\quad \lambda_{h,2}^{n+1}\geq 0, \quad g(u^{n+1})\geq0,\\
\end{aligned}
\\
&\begin{aligned}
\langle u^{n+1},1\rangle=\langle \widetilde{u}^{n+1},1\rangle,  \label{mass_conservation2}
\end{aligned}
\end{align}
\end{subequations} 
where 
\begin{equation}
\varphi_2(a)=\frac{\mathrm{e}^{-a}-1+a}{a^2}.
\end{equation}
We know that $\varphi_0(a)$, $\varphi_1(a)$ and $\varphi_2(a)$ are positive when $a\geq0$, where the values at $a=0$  can be
defined in the sense of limit.

The mass conservation property can also be established for the P-ETDRK2 scheme.

\begin{lemma}\label{second_order_mass_conservation}
The P-ETDRK2 scheme \eqref{mid_solution} and \eqref{P-ETDRK2} conserves the mass unconditionally, i.e., for any time step size $\tau>0$, the P-ETDRK2 scheme satisfies
\begin{equation}
\langle u^{n+1},1\rangle =\langle u^n,1\rangle.
\end{equation}
\end{lemma}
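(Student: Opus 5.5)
The argument mirrors the proof of Lemma \ref{first_order_mass_conservation}. We first show that the predicted solution $\widetilde{u}^{n+1}$ of \eqref{P-ETDRK2} has the same mass as $u^n$. The correction constraint \eqref{mass_conservation2} then gives $\langle u^{n+1},1\rangle=\langle \widetilde{u}^{n+1},1\rangle=\langle u^n,1\rangle$.

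Two mass identities will be used.

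\emph{The linear part.} By periodicity and summation by parts, $\langle \Delta_h v,1\rangle=\langle v,\Delta_h 1\rangle=0$ for every $v\in\mathcal{M}_h$. Hence $\langle L_h v,1\rangle=\sigma\langle v,1\rangle$.

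\emph{The nonlinear part.} For any $v\in\mathcal{M}_h$, every Laplacian term in $F(v)$ has zero mass. The only surviving term is $\sigma\overline{v}$, where $\overline{v}=\langle v,1\rangle/|\Omega|$. Therefore $\langle F(v),1\rangle=\sigma\langle v,1\rangle$.

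\emph{The intermediate stage.} The middle stage \eqref{mid_solution} is exactly one P-ETD1 step. By Lemma \ref{first_order_mass_conservation}, $\langle u^{n+1}_{\text{mid}},1\rangle=\langle u^n,1\rangle$, and so $\langle F(u^{n+1}_{\text{mid}}),1\rangle=\langle F(u^n),1\rangle=\sigma\langle u^n,1\rangle$.

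\emph{The prediction step as an ODE.} We recast it as finding $W_2(s)$ on $[0,\tau]$ with
\[
\frac{\mathrm{d}W_2}{\mathrm{d}s}+L_hW_2=\Big(1-\frac{s}{\tau}\Big)F(u^n)+\frac{s}{\tau}F(u^{n+1}_{\text{mid}}),\qquad W_2(0)=u^n,
\]
and set $\widetilde{u}^{n+1}=W_2(\tau)$. By the variation-of-constants formula this agrees with the first line of \eqref{P-ETDRK2}. Take the inner product with $1$ and let $V(s)=\langle W_2(s),1\rangle$. By the two identities and the equality of the masses of $F(u^n)$ and $F(u^{n+1}_{\text{mid}})$, the right-hand side has mass $\sigma\langle u^n,1\rangle$ for every $s$. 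This gives
\[
V'(s)+\sigma V(s)=\sigma\langle u^n,1\rangle,\qquad V(0)=\langle u^n,1\rangle.
\]
The constant function $V\equiv\langle u^n,1\rangle$ solves this initial value problem. By uniqueness, or by multiplying by $e^{\sigma s}$ and integrating, $V(\tau)=\langle u^n,1\rangle$.

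\emph{Main obstacle.} There is little difficulty. The one point needing care is that the intermediate projected value $u^{n+1}_{\text{mid}}$ has the correct mass, so that the time-dependent forcing has constant mass. This is supplied directly by Lemma \ref{first_order_mass_conservation}.
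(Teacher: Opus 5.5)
Your proof is correct and follows essentially the same route as the paper. You recast the prediction step as the ODE for $W_2(s)$, pair it with $1$ to obtain a scalar linear ODE for $\langle W_2(s),1\rangle$ whose solution is constant, and then invoke the constraint \eqref{mass_conservation2}. Along the way you make explicit two points the paper leaves implicit: that $\langle u^{n+1}_{\text{mid}},1\rangle=\langle u^n,1\rangle$ by Lemma \ref{first_order_mass_conservation}, which is what gives the forcing constant mass, and the factor $\sigma$ on the right-hand side of that scalar ODE, which the paper's display omits.
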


\begin{proof}
Similar to P-ETD1 scheme, the prediction step can also be established for P-ETDRK2 scheme as the differential form: to find $W_2(s):[0,\tau]\to \mathcal{M}_h$ satisfying:
\begin{equation}\label{ODE_ETDRK2}
\left\{
\begin{aligned}
&\frac{\mathrm{d} W_2(s)}{\mathrm{d}s}+L_h W_2(s)=(1-\frac{s}{\tau} )F(u^n)+\frac{s}{\tau} F(u^{n+1}_{\text{mid}}) , \quad s\in(0,\tau],\\
&W_2(0)=u^n,
\end{aligned}
\right.
\end{equation}
and then set $W_2(\tau)=\widetilde{u}^{n+1}$.
Taking the discrete $L^2$ inner product with \eqref{ODE_ETDRK2} by $1$ , we have 
\begin{equation}
\frac{\mathrm{d}}{\mathrm{d}s} \langle W_2(s),1\rangle +\sigma \langle W_2(s),1\rangle =\langle u^n,1\rangle,
\end{equation} 
Similar to the proof of Lemma \ref{first_order_mass_conservation}, we obtain $\langle \widetilde{u}^{n+1},1\rangle =\langle u^n,1\rangle$. Then combining with \eqref{mass_conservation2}, we have $\langle u^{n+1},1\rangle=\langle u^n,1\rangle$.
\end{proof}

\section{ Convergence analysis
}\label{convergence}

Suppose that exact solution $u$ is smooth sufficiently, and the following separation property holds  for the exact solution $u$:
\begin{equation}
\left\lVert u\right\rVert _\infty \leq 1-2\delta  \quad \text{for} \quad \delta>0, \quad \text{at a point-wise level}.
\end{equation} 

First we give the following properties for the exponential-type functions  which are crucial to the latter convergence analysis.
\begin{lemma}[see \cite{High2008}]\label{property1}
  Let  $\varphi$ be defined on the spectrum of a real and symmetric matrix $A$ of order $m$, that is, the values $\varphi(\lambda_i)$, $1\leq i \leq m$ exist, where $\{\lambda_i\}_{i=1}^m$ are the eigenvalues of $A$. Then
  \begin{enumerate}[itemsep=0pt,partopsep=0pt,parsep=\parskip,topsep=0pt]
  \item[\rm (1)] $\varphi(A)$ is symmetric and commutes with $A$;
  \item[\rm (2)] the eigenvalues of $\varphi(A)$ are $\{\varphi(\lambda_i):1 \leq i \leq m\}$;
  \item[\rm (3)] $\varphi(P^{-1}AP)=P^{-1}\varphi(A)P$ for any nonsingular matrix $P$ of order $m$;
  \item[\rm (4)] $\frac{\d}{\d s}(\mathrm{e}^{sA})=A\mathrm{e}^{sA}=\mathrm{e}^{sA}A$ for any $s\in \mathbb{R}$.
  \end{enumerate}
\end{lemma}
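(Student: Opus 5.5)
The natural route is the spectral theorem: every property should reduce to a statement about the scalar function $\varphi$ acting on a diagonal matrix. Since $A$ is real and symmetric, we write $A=Q\Lambda Q^T$ with $Q$ orthogonal and $\Lambda=\mathrm{diag}(\lambda_1,\dots,\lambda_m)$ real. We adopt the standard definition of a matrix function on the spectrum, namely $\varphi(A)=Q\,\mathrm{diag}(\varphi(\lambda_1),\dots,\varphi(\lambda_m))\,Q^T$. We first check that this is well defined, i.e.\ independent of the choice of diagonalization. To do so, we note that $\varphi(A)=p(A)$ for any polynomial $p$ interpolating $\varphi$ at the distinct eigenvalues (Lagrange interpolation). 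The right-hand side $p(A)$ depends only on $A$, so the definition does not depend on $Q$.

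With this in hand, items (1) and (2) are immediate.
\begin{enumerate}[itemsep=0pt,partopsep=0pt,parsep=\parskip,topsep=0pt]
\item[\rm (1)] Symmetry follows because $\varphi(A)^T=Q\,\mathrm{diag}(\varphi(\lambda_i))^T Q^T=\varphi(A)$, using that the $\varphi(\lambda_i)$ are real, as they are for the real-valued functions $\varphi_0,\varphi_1,\varphi_2$ used in the paper. Commutation follows because $\varphi(A)=p(A)$ is a polynomial in $A$. Alternatively, both $A$ and $\varphi(A)$ are diagonal in the same basis $Q$.
\item[\rm (2)] The factorization $\varphi(A)=Q\,\mathrm{diag}(\varphi(\lambda_i))\,Q^T$ is an orthogonal similarity to a diagonal matrix. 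Hence the eigenvalues of $\varphi(A)$ are exactly the $\varphi(\lambda_i)$.
\end{enumerate}

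For (3), the matrix $B=P^{-1}AP$ need not be symmetric, so the orthogonal-diagonalization definition no longer applies to it directly. Here we use the interpolation characterization instead. First, $B$ has the same spectrum as $A$, so the same interpolating polynomial $p$ works for both and $\varphi(B)=p(B)$ by the general (Jordan or diagonalizable) definition. Second, $p(P^{-1}AP)=P^{-1}p(A)P$ holds termwise, since $(P^{-1}AP)^k=P^{-1}A^kP$. Combining the two gives $\varphi(P^{-1}AP)=P^{-1}\varphi(A)P$.

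For (4), we take $\varphi(x)=\mathrm{e}^{sx}$, so that $\mathrm{e}^{sA}=Q\,\mathrm{diag}(\mathrm{e}^{s\lambda_i})\,Q^T$. Differentiating entrywise in $s$ gives $Q\,\mathrm{diag}(\lambda_i\mathrm{e}^{s\lambda_i})\,Q^T=Q\Lambda Q^T\,Q\,\mathrm{diag}(\mathrm{e}^{s\lambda_i})\,Q^T=A\mathrm{e}^{sA}$. The same computation with the diagonal factors in the opposite order yields $\mathrm{e}^{sA}A$. Equivalently, one can differentiate the power series $\sum_k s^kA^k/k!$ term by term, which is justified by its uniform convergence on compact $s$-intervals.

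The only genuinely delicate step is (3), together with the well-definedness issue. Both rely on the fact that a primary matrix function coincides with a polynomial interpolant on the spectrum, which we will quote from Higham's book \cite{High2008} rather than reprove. Everything else is routine diagonal algebra.
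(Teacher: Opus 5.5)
Your proposal is correct. The paper gives no proof of its own and simply cites Higham's book, and your route (orthogonal diagonalization $A=Q\Lambda Q^T$ combined with the interpolation characterization $\varphi(A)=p(A)$) is essentially how the cited source establishes these facts, including your correct handling of the possibly nonsymmetric $P^{-1}AP$ in (3). One small remark: the identity $\varphi(A)^T=\varphi(A)$ in (1) holds even for complex values $\varphi(\lambda_i)$, since a diagonal matrix equals its own transpose; realness of $\varphi(\lambda_i)$ is needed only if you want $\varphi(A)$ to be a \emph{real} symmetric matrix.
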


\begin{lemma}\label{property2}
\begin{enumerate}
\item [\rm (1)] For any $a>0$, the following inequalities hold:
\begin{equation}
\begin{aligned}
&0<(1+a)\varphi_0(a)<1,\\
&1<(1+a)\varphi_1(a)<2,\\
&\frac{1}{2}<(1+a)\varphi_2(a)<1,\\
&0<(1+a)[\varphi_1(a)-\varphi_2(a)]<1. 
\end{aligned}
\end{equation}
\item [\rm (2)] If $0< s< \tau \leq 1$, then for any $a>0$, it holds that 
\begin{equation}
0< (1+a\tau)\mathrm{e}^{-a(\tau-s)}<1.
\end{equation}  
\end{enumerate}
\end{lemma}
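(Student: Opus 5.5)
Every inequality in part (1) will be reduced to an elementary one-variable statement. Positivity of $\varphi_0,\varphi_1,\varphi_2$ on $a>0$ was noted earlier. For each upper or lower bound I will clear the denominators $a$ or $a^2$. That gives an auxiliary function $h$ with $h(0)=0$, and I will show $h'>0$ on $(0,\infty)$, or $h'(0)=0$ together with $h''>0$. The basic fact throughout is $\mathrm{e}^{a}>1+a$ for $a>0$, that is, $(1+a)\mathrm{e}^{-a}<1$, which is already the first inequality.

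\emph{Second inequality.} The bound $(1+a)\varphi_1(a)>1$ is equivalent to $1-(1+a)\mathrm{e}^{-a}>0$, which is the first inequality. The bound $(1+a)\varphi_1(a)<2$ is equivalent to $h(a):=a-1+(1+a)\mathrm{e}^{-a}>0$. Here $h(0)=0$ and $h'(a)=1-a\mathrm{e}^{-a}\ge 1-\mathrm{e}^{-1}>0$.

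\emph{Third inequality.} For the $\varphi_2$ bounds I set
\begin{equation*}
p(a)=2(1+a)(\mathrm{e}^{-a}-1+a)-a^2,\qquad q(a)=a^2-(1+a)(\mathrm{e}^{-a}-1+a).
\end{equation*}
Both vanish at $0$. A direct computation gives $p'(a)=2a(1-\mathrm{e}^{-a})>0$ and $q'(a)=a\mathrm{e}^{-a}>0$, so $\tfrac12<(1+a)\varphi_2(a)<1$.

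\emph{Fourth inequality.} I will use the identity
\begin{equation*}
\varphi_1(a)-\varphi_2(a)=\frac{1-(1+a)\mathrm{e}^{-a}}{a^2}.
\end{equation*}
Positivity then follows from the first inequality. The upper bound is equivalent to $r(a):=a^2-(1+a)+(1+a)^2\mathrm{e}^{-a}>0$. We have $r(0)=0$ and $r'(a)=2a-1+(1-a^2)\mathrm{e}^{-a}$, so $r'(0)=0$. Next, $r''(a)=2-(1+2a-a^2)\mathrm{e}^{-a}$. Since $1+2a-a^2=2-(a-1)^2\le 2$ and $\mathrm{e}^{-a}<1$, we get $r''>0$ for $a>0$. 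This second-derivative step is the only mildly delicate point in part (1).

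\emph{Part (2): the main obstacle.} I do not expect the statement as written to be provable. Letting $s\to\tau^-$ gives $(1+a\tau)\mathrm{e}^{-a(\tau-s)}\to 1+a\tau>1$. The upper bound $<1$ therefore fails for $s$ close to $\tau$, for every $a>0$ and $\tau\in(0,1]$. Only the lower bound $>0$ holds as stated, and it is trivial.

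My plan is to first check how part (2) is used later in the convergence analysis, to identify the intended form. The natural candidate is $0<(1+a(\tau-s))\mathrm{e}^{-a(\tau-s)}<1$. This follows immediately from the first inequality of part (1) applied with $a(\tau-s)>0$ in place of $a$. I would prove that corrected version. If the later error estimate really needs a factor $1+a\tau$, I would instead aim for a bound such as $(1+a\tau)\mathrm{e}^{-a(\tau-s)}\le 1+a\tau\le 2$, using $\tau\le 1$ and boundedness of $a$ in the intended application, and adjust the constants downstream.
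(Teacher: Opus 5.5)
\textbf{Part (1).} Your proof is correct and complete. The identity $\varphi_1(a)-\varphi_2(a)=\bigl(1-(1+a)\mathrm{e}^{-a}\bigr)/a^2$ checks out, as do the derivatives $h'(a)=1-a\mathrm{e}^{-a}$, $p'(a)=2a(1-\mathrm{e}^{-a})$, $q'(a)=a\mathrm{e}^{-a}$, $r'(a)=2a-1+(1-a^2)\mathrm{e}^{-a}$ and $r''(a)=2-(1+2a-a^2)\mathrm{e}^{-a}$. Your sign argument for $r''$ is valid, since $c\le 2$ and $0<\mathrm{e}^{-a}<1$ force $c\,\mathrm{e}^{-a}<2$ whatever the sign of $c$. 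The paper states this lemma without any proof, so there is no competing argument to compare with; yours supplies one for (1).

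\textbf{Part (2).} You are right that the statement is false. With $a=\tau=1$ and $s=0.99$ the quantity equals $2\mathrm{e}^{-0.01}\approx 1.98$. In general it tends to $1+a\tau>1$ as $s\to\tau^-$, so only the trivial lower bound holds.

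Be careful with your two repairs, though, because neither matches how the paper uses this item. In \eqref{4_1} and \eqref{6_1} it is used to assert $\lVert (I+\tau L_h)\mathrm{e}^{-(\tau-s)L_h}\rVert\le 1$. There $a$ runs over the spectrum of $L_h$, whose top eigenvalue is about $64\epsilon^2h^{-4}$.
\begin{itemize}
\item Your corrected inequality with $1+a(\tau-s)$ controls a different operator, so it does not give this bound.
\item The fallback ``$a$ is bounded in the application'' would produce a factor $1+C\tau h^{-4}$. That destroys the $h$-independent constants claimed in both convergence theorems, or forces $\tau\lesssim h^4$.
\end{itemize}

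What is true, and what those estimates actually need, is control after integrating in $s$ rather than pointwise. For instance, by your own second inequality evaluated at $a\tau$,
\[
\int_0^\tau(1+a\tau)\mathrm{e}^{-a(\tau-s)}\,\d s=\tau(1+a\tau)\varphi_1(a\tau)<2\tau .
\]
To handle the $s$-dependence of $R_1$ cleanly, split $I+\tau L_h$. The $I$ part is harmless since $\lVert \mathrm{e}^{-(\tau-s)L_h}\rVert\le1$. Absorb the $\tau L_h$ part into (a fraction of) the term $\tau\langle L_h\widetilde{e}^{n+1},\widetilde{e}^{n+1}\rangle$ on the left. This uses the weighted Cauchy--Schwarz bound $a\bigl|\int_0^\tau\mathrm{e}^{-a(\tau-s)}r(s)\,\d s\bigr|^2\le\int_0^\tau|r(s)|^2\,\d s$.
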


\subsection{Fourier projection of the exact solution}

For the numerical analysis, we introduce the Fourier projection for the exaction solution $u$, due to the fact that $I^h u$ is not mass conservative at the discrete level in general, i.e. $\langle I^h u(\bm{x},t_n),1\rangle \neq \langle I^h u(\bm{x},t_{n-1}),1\rangle$.

Define $u_N(\cdot,t):=\mathcal{P}_Nu(\cdot,t)$, the spatial Fourier projection of the exact solution into $\mathcal{B}^N$, the space of trigonometric polynomials of degree up to $N$. The following projection approximation is standard: if $u\in L^\infty((0,T);H_{\text{per}}^l)$, for  $l\in \mathbb{N} $
\begin{equation}\label{projection_approximation}
\left\lVert u_N-u\right\rVert _{L^\infty(0,T;H^k)}\leq Ch^{l-k}\left\lVert u\right\rVert _{L^\infty(0,T;H^l)},\quad \forall 0\leq k\leq l.
\end{equation}

In fact, the Fourier projection estimates do not automatically preserve the positivity of $1+u_N$ and $1-u_N$; however, we could enforce the phase separation property that 
\begin{equation}
\left\lVert u_N\right\rVert _\infty \leq 1-\delta,
\end{equation}
if $h$ is taken sufficiently small. 
It is clear that $\int_{\Omega} u_N(\bm{x},t_n)\d \bm{x}= \int_{\Omega} u(\bm{x},t_n)\d \bm{x}$, for any $n\in \mathbb{N}$, due to the fact that $u_N$ is the Fourier projection of $u$, and thus,
\begin{equation*}
\int_{\Omega}u_N(\bm{x},t_n) \d\bm{x}=\int_{\Omega}u(\bm{x},t_n)\d\bm{x}=\int_{\Omega}u(\bm{x},t_{n-1})\d\bm{x}=\int_{\Omega}u_N(\bm{x},t_{n-1})\d\bm{x},\quad \forall n\in\mathbb{N}, 
\end{equation*}
in which the second step is based on the fact that the exact solution $u$ is mass conservative at the continuous level. On the other hand, Lemma \ref{first_order_mass_conservation}  and  Lemma \ref{second_order_mass_conservation}  both indicate that  the numerical solution is mass conservative at the discrete level:
\begin{equation}\label{mass_conservative}
\overline{u^n}=\overline{u^{n-1}}\quad \forall n\in \mathbb{N}.
\end{equation}
Meanwhile, since $u_N\in \mathcal{B}^N$, it always holds that 
\begin{equation*}
\int_{\Omega}u_N(\bm{x},t_n)d\bm{x}=h^2 \sum_{i,j=1}^{M}u_N(x_i,y_j,t_n)=h^2\sum_{i,j=1}^{M}(I^h u_N^n)_{i,j}, 
\end{equation*}
where
$\phi_N^n$ = $\phi_N(\bm{x},t_n)$.
The mass conservative property is available at the discrete level:
\begin{equation*}
\overline{I^h u_N^n}=\overline{I^h u_N^{n-1}}.
\end{equation*}
As indicated before, we use the mass conservative projection for the initial data: $u^0=I^hu_N(\bm{x},0)$, that is,
\begin{equation}\label{initial_value}
(u^0)_{i,j}:=u_N(x_i,y_j,0).
\end{equation}
The error grid function is defined as
\begin{equation*}
e^n:=u^n-I^hu_N^n ,\quad \widetilde{e}^n=\widetilde{u}^n-I^h u_N^n.
\end{equation*}
Therefore, it follows that 
\begin{equation}\label{3_2}
\overline{e^n}=0,  
\end{equation}
due to the fact that 
\begin{equation}
\overline{I^h u_N^n}=\overline{I^h u_N^0}=\overline{u^0}=\overline{u^n} \quad \forall n\geq0.
\end{equation}

Note that the Fourier projection of the exact solution has to be taken at the initial time step as in \eqref{initial_value}, instead of a pointwise interpolation of the exact initial value, to ensure the zero-property of the error grid function $e^n$ at a discrete level.

\begin{lemma}\label{lagrange_error}
For the error functions $e^n$ and $\widetilde{e}^{n}$, it holds that 
\begin{equation}
\left\lVert e^{n}\right\rVert ^2+\left\lVert e^{n}-\widetilde{e}^{n}\right\rVert ^2\leq \left\lVert \widetilde{e}^n\right\rVert ^2.
\end{equation}
\end{lemma}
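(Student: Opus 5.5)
The plan is to exploit the fact that $u^{n}$ is the discrete $L^2$ projection of $\widetilde{u}^{n}$ onto the closed convex set $\mathcal{M}_\delta$ (defined with mass $\langle \widetilde{u}^{n},1\rangle$), and that $I^h u_N^n$ lies in this same set. Given that, the estimate is the standard Pythagorean-type inequality for projections onto convex sets.

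\textbf{Step 1: $I^h u_N^n\in\mathcal{M}_\delta$.} The separation assumption for $u_N$ gives $\|I^h u_N^n\|_\infty\le 1-\delta$ when $h$ is small. For the mass, $\langle I^h u_N^n,1\rangle=\langle u^0,1\rangle$ by the discrete mass conservation of $I^h u_N$. The proofs of Lemmas \ref{first_order_mass_conservation} and \ref{second_order_mass_conservation} give $\langle \widetilde u^{n},1\rangle=\langle u^{n-1},1\rangle=\langle u^0,1\rangle$. Hence $\langle I^h u_N^n,1\rangle=\langle \widetilde u^n,1\rangle$, and so $I^h u_N^n$ is admissible.

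\textbf{Step 2: variational inequality.} I would derive the inequality
\begin{equation*}
\langle \widetilde{u}^n-u^n,\ \phi-u^n\rangle\le 0 \qquad \forall \phi\in\mathcal{M}_\delta
\end{equation*}
directly from the KKT system \eqref{corrector}. We have $\widetilde u^n-u^n=-\lambda_h^n g'(u^n)-\xi_h^n=2\lambda_h^n u^n-\xi_h^n$. The constant $\xi_h^n$ contributes nothing, since $\langle \xi_h^n,\phi-u^n\rangle=\xi_h^n(\langle\phi,1\rangle-\langle u^n,1\rangle)=0$ by equal masses. For the remaining part, pointwise $\lambda^n_{ij}\ge0$, and $\lambda^n_{ij}>0$ only where $|u^n_{ij}|=1-\delta$ by complementary slackness. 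At such points $u^n_{ij}(\phi_{ij}-u^n_{ij})=u^n_{ij}\phi_{ij}-(1-\delta)^2\le |u^n_{ij}||\phi_{ij}|-(1-\delta)^2\le0$. Summing gives the claim. Alternatively, one can simply invoke the first-order optimality condition of the convex problem \eqref{arg}.

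\textbf{Step 3: algebra.} Set $\phi=I^h u_N^n$, so that $\phi-u^n=-e^n$ and $\widetilde u^n-u^n=\widetilde e^n-e^n$. Step 2 then reads $\langle \widetilde e^n-e^n,-e^n\rangle\le0$, i.e. $\langle \widetilde e^n-e^n, e^n\rangle\ge 0$. Expanding
\begin{equation*}
\|\widetilde e^n\|^2=\|e^n+(\widetilde e^n-e^n)\|^2=\|e^n\|^2+\|\widetilde e^n-e^n\|^2+2\langle e^n,\widetilde e^n-e^n\rangle
\end{equation*}
and dropping the nonnegative cross term yields the lemma.

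The main obstacle is Step 1, specifically making sure the admissible set at step $n$, whose mass is that of $\widetilde u^n$, indeed contains $I^h u_N^n$. This relies on the prediction step preserving mass exactly and on the consistency \eqref{3_2} of the Fourier projection's discrete mass. The bound $\|I^hu_N^n\|_\infty\le1-\delta$ is taken from the stated separation property for sufficiently small $h$.
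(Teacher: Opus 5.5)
Your proposal is correct and follows essentially the same route as the paper. The paper tests $e^n-\widetilde e^n=\lambda_h^n g'(u^n)+\xi_h^n$ with $2e^n$, which gives the same three-term identity as your Step 3, removes the $\xi_h^n$ term using $\overline{e^n}=0$ (your equal-mass observation), and signs the $\lambda_h^n$ term via complementary slackness and the exact quadratic Taylor expansion of $g$, obtaining $-2\langle \lambda_h^n, g(u_N(t_n))+(u_N(t_n)-u^n)^2\rangle\le 0$. Your pointwise bound $u^n_{ij}\phi_{ij}\le (1-\delta)^2=(u^n_{ij})^2$ at active nodes is an equivalent way to get that sign, and your Step 1 makes explicit the membership $I^h u_N^n\in\mathcal{M}_\delta$ that the paper uses implicitly.
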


\begin{proof}
$n=0$ is trivial. For $n\geq 1$, we derive from \eqref{corrector} that 
\begin{equation}\label{3_3}
e^{n}-\widetilde{e}^{n}=\lambda_h^{n}g'(u^{n})+\xi_h^n.
\end{equation}
Taking the discrete $L^2$ inner product with \eqref{3_3}  by $2e^n$ yields 
\begin{equation}\label{1_5}
\left\lVert e^n\right\rVert ^2 -\left\lVert \widetilde{e}^{n}\right\rVert ^2 +\left\lVert e^{n}-\widetilde{e}^{n}\right\rVert ^2=2\langle \lambda_h^ng'(u^n),e^{n}\rangle +2\xi_h^n \langle e^n,1\rangle .
\end{equation}
The last term on the right side of \eqref{1_5} can be eliminated due to the fact that $\overline{e^n}=0$. 
On the other hand, the KKT condition \eqref{1_4_b} indicates that 
\begin{equation}\label{leq}
\begin{aligned}
2\langle \lambda_h^ng'(u^n),e^n\rangle &=2 \langle \lambda_h^ng'(u^n),u^n-u_N(t_n)\rangle-2\langle \lambda_h ^n ,g(u^n)\rangle \\
&=-2\langle \lambda_h^n,g'(u^n)(u_N(t_n)-u^n)+g(u^n)\rangle.
\end{aligned}
\end{equation}
We omit the operator $I^h$ which should appear in front of $u_N(t_n)$, for simplicity.
By Taylor expansion, we have 
\begin{equation}\label{taylor}
g(u_N(t_n))=g(u^n)+g'(u^n)(u_N(t_n)-u^n)+\frac{1}{2}g''(\zeta _h)(u_N(t_n)-u^n)^2, 
\end{equation}
with $g''(\zeta_h)=-2$.
Substituting \eqref{taylor} into \eqref{leq}, we obtain 
\begin{equation}
\begin{aligned}
&2\langle \lambda_h^ng'(u^n),e^n\rangle\\
&=-2\langle \lambda_h^n,g(u_N(t_n))-\frac{1}{2} g''(\zeta_h)(u_N(t_n)-u^n)^2\rangle\\
&=-2\langle \lambda_h^n, g(u_N(t_n))+(u_N(t_n)-u^n)^2  \rangle \\
&\leq 0,
\end{aligned}
\end{equation}
where we have used the fact that $\lambda_h^n\geq0$ at a point-wise level and $\left\lVert u_N(t_n) \right\rVert _\infty \leq 1-\delta $ as long as the spatial step size is taken sufficiently small. Combining the above inequality, we complete the proof.
\end{proof}

\subsection{Error estimate for the P-ETD1 method}

\begin{theorem}\label{convergence_theorem_ETD1}
Given a fixed time $T>0$, and suppose that the exact solution for NCH equation \eqref{nonlocal_eq} is smooth enough. If $\tau \leq \frac{1}{2}  $ and $h$ is small sufficiently,  the numerical solution generated by the P-ETD1 scheme has the following error estimate:
\begin{equation}
\left\lVert u^n-I^h u(t_n)\right\rVert \leq C(\tau+h^2),
\end{equation}
where the constant $C>0$ depends on $T$, $\epsilon$, $\kappa$, $\theta_c$ and $\theta$, but is independent of $\tau$ and $h$. 
\end{theorem}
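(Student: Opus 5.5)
We compare the scheme with the Fourier projection $u_N$, and work throughout with the errors $e^n$ and $\widetilde e^n$. By \eqref{projection_approximation}, $\|I^h u_N^n-I^hu(t_n)\|\le Ch^l$, so it suffices to bound $\|e^n\|$. The plan has four steps: derive a consistency equation for $u_N$, subtract the prediction step, estimate $\widetilde e^{n+1}$, and then pass to $e^{n+1}$ via Lemma \ref{lagrange_error}.

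\textbf{Consistency.} Substituting $I^hu_N$ into \eqref{space_discrete_L_h} gives
\begin{equation*}
\frac{\d}{\d t} I^h u_N+L_h I^hu_N=F(I^hu_N)+R_h(t),\qquad \|R_h\|\le Ch^2 .
\end{equation*}
Here we use the smoothness of $u$, the second-order accuracy of $\Delta_h$ and $\Delta_h^2$, and the separation $\|u_N\|_\infty\le 1-\delta$, which keeps the logarithms smooth. Applying the variation-of-constants formula \eqref{exact_solution_t} over $[t_n,t_{n+1}]$ and comparing with the ETD1 prediction \eqref{ETD1} yields
\begin{equation*}
\widetilde e^{n+1}=\mathrm e^{-\tau L_h}e^n+\int_0^\tau \mathrm e^{-(\tau-s)L_h}\bigl(F(u^n)-F(I^hu_N^n)\bigr)\,\d s+\tau\mathcal R^n,\qquad \|\mathcal R^n\|\le C(\tau+h^2).
\end{equation*}
The local term $\mathcal R^n$ comes from the $O(\tau)$ variation of $F(I^hu_N(t_n+s))$ over the step, plus $R_h$.

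\textbf{Estimating the nonlinear difference.} Since $\|u^n\|_\infty\le1-\delta$ by the projection step and $\|I^hu_N^n\|_\infty\le1-\delta$, the function $f(v)=\frac\theta2[\ln(1+v)-\ln(1-v)]$ is Lipschitz on $[-1+\delta,1-\delta]$ with constant $C_\delta$. Writing $F(u^n)-F(I^hu_N^n)=\Delta_h\bigl(f(u^n)-f(u_N^n)-(\theta_c+\kappa)e^n\bigr)$, the term $\sigma\overline{e^n}$ vanishes by \eqref{3_2}. The difficulty is that $\Delta_h$ is unbounded in $h$. It must be absorbed by the smoothing of $\mathrm e^{-(\tau-s)L_h}$: because $L_h\ge \epsilon^2\Delta_h^2$, we have
\begin{equation*}
\bigl\|\mathrm e^{-(\tau-s)L_h}\Delta_h\bigr\|\le \frac{C}{\epsilon\sqrt{\tau-s}},
\end{equation*}
which follows from Lemma \ref{property1} and the scalar bound $\sup_{x\ge0}x\,\mathrm e^{-t\epsilon^2x^2}\le C/(\epsilon\sqrt t)$. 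Integrating $(\tau-s)^{-1/2}$ over $[0,\tau]$ gives $C\sqrt\tau$. Using $\|\mathrm e^{-\tau L_h}\|\le \mathrm e^{-\sigma\tau}\le1$, we obtain
\begin{equation*}
\|\widetilde e^{n+1}\|\le (1+C\sqrt\tau)\|e^n\|+C\tau(\tau+h^2).
\end{equation*}

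\textbf{Closing the recursion.} A factor $1+C\sqrt\tau$ is too weak for Gronwall over $T/\tau$ steps, so this is the main obstacle. I would first try an energy approach instead. Write $\widetilde u^{n+1}=W_1(\tau)$ as in \eqref{ODE_ETD1} and define the corresponding error ODE on $[0,\tau]$,
\begin{equation*}
\frac{\d}{\d s} E(s)+L_h E(s)=F(u^n)-F(I^hu_N^n)+O(\tau+h^2).
\end{equation*}
Test it with $E(s)$ and note that $\langle \Delta_h(f(u^n)-f(u_N^n)),E\rangle=-\langle \nabla_h(f(u^n)-f(u_N^n)),\nabla_h E\rangle$. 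To avoid a gradient estimate on the nonlinear term, move $\Delta_h$ onto $E$ instead:
\begin{equation*}
\bigl|\langle f(u^n)-f(u_N^n),\Delta_h E\rangle\bigr|\le C_\delta\|e^n\|\,\|\Delta_h E\|\le \frac{\epsilon^2}{2}\|\Delta_hE\|^2+C\|e^n\|^2 .
\end{equation*}
The first term is absorbed by $\langle L_hE,E\rangle\ge\epsilon^2\|\Delta_hE\|^2$, and the $(\theta_c+\kappa)\Delta_h e^n$ term is handled the same way. Integrating in $s$ then gives the desired one-step bound
\begin{equation*}
\|\widetilde e^{n+1}\|^2\le(1+C\tau)\|e^n\|^2+C\tau(\tau+h^2)^2 .
\end{equation*}
The assumption $\tau\le\frac12$ enters here, to control the $O(\tau)$ coefficients in this step. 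Lemma \ref{property2} can serve as an alternative tool if the bounds are instead derived from the integral representation.

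\textbf{Conclusion.} Lemma \ref{lagrange_error} gives $\|e^{n+1}\|\le\|\widetilde e^{n+1}\|$, so the projection does not amplify the error. With $e^0=0$ from \eqref{initial_value}, the discrete Gronwall inequality yields $\|e^n\|\le C\mathrm e^{CT}(\tau+h^2)$, and the triangle inequality with the projection estimate completes the proof. No separate a priori $L^\infty$ bootstrap is needed, because the projection onto $\mathcal M_\delta$ enforces $\|u^n\|_\infty\le1-\delta$ at every step.
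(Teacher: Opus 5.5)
Your proof is correct, and its core coincides with the paper's. You use $\overline{e^n}=0$ to remove the $\sigma$-term and write $F(u^n)-F(I^hu_N^n)$ as $\Delta_h$ of a quantity bounded by $C\|e^n\|$ (Lipschitz on $[-1+\delta,1-\delta]$, which the projection guarantees). You then move $\Delta_h$ onto the error, absorb it with the $\epsilon^2\|\Delta_h\cdot\|^2$ dissipation from $L_h$, and finish with Lemma~\ref{lagrange_error} and discrete Gronwall.

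Where you differ is in how the one-step bound $\|\widetilde e^{n+1}\|^2\le(1+C\tau)\|e^n\|^2+C\tau(\tau+h^2)^2$ is produced. The paper stays with the variation-of-constants formula for $\widetilde e^{n+1}$, applies $I+\tau L_h$ and tests with $\widetilde e^{n+1}$. The dissipation then appears as $\tau\epsilon^2\|\Delta_h\widetilde e^{n+1}\|^2$ on the left. The operators $(I+\tau L_h)\varphi_0(\tau L_h)$ and $(I+\tau L_h)\varphi_1(\tau L_h)$ are bounded by $1$ and $2$ through Lemma~\ref{property2}(1), using that functions of $L_h$ are symmetric and commute with $\Delta_h$. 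The condition $\tau\le\frac12$ is what allows division by $1-\tau$.

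You instead run a continuous-in-$s$ energy estimate on the error ODE over $[0,\tau]$, where $\int_0^\tau\epsilon^2\|\Delta_hE\|^2\,\d s$ does the absorbing. This needs only summation by parts and $\langle L_hE,E\rangle\ge\epsilon^2\|\Delta_hE\|^2+\sigma\|E\|^2$. The restriction on $\tau$ becomes essentially cosmetic. Your argument also extends to P-ETDRK2 just as easily, since the forcing in \eqref{ODE_ETDRK2} is a convex combination of two differences of the same type.

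Your route also has a concrete advantage in the remainder term, because you only pair $R(s)$ with $E(s)$. The paper's bound \eqref{4_1} relies instead on Lemma~\ref{property2}(2), which is false as stated: $(1+a\tau)\mathrm e^{-a(\tau-s)}\to1+a\tau>1$ as $s\to\tau$. In fact $\sup_{a>0}(1+a\tau)\mathrm e^{-a(\tau-s)}$ is of order $\tau/(\tau-s)$. Consequently $\int_0^\tau\|(I+\tau L_h)\mathrm e^{-(\tau-s)L_h}\|\,\d s$ grows like $\tau\ln(1+\tau\lambda_{\max}(L_h))$ and is not uniform in $h$.

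So do not take at face value your closing remark that Lemma~\ref{property2} could be used on the integral representation; that route needs a repair. One fix is to split $I+\tau L_h=(I+\tau L_h)^{1/2}(I+\tau L_h)^{1/2}$ and use $\|(I+\tau L_h)^{1/2}\mathrm e^{-(\tau-s)L_h}\|\le\sqrt{\tau/(\tau-s)}$, which is integrable.

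Finally, your preliminary smoothing bound giving $1+C\sqrt\tau$ is correct but, as you note, useless for Gronwall, so drop it from the final write-up.
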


\begin{proof}
It suffices to estimate the term $\left\lVert u^n -I^h u_N(t_n) \right\rVert$ under the projection approximation \eqref{projection_approximation}.
For the NCH equation, we can give a similar representation as follows: for a given $u_N(\bm{x},t_n)$, the solution $u_N(\bm{x},t_{n+1})=w(\bm{x},\tau)$ is given by the following system:
\begin{equation}\label{ODE_exact_solution}
\left\{
\begin{aligned}
&\frac{\partial w(s)}{\partial s}-\kappa \Delta w+\epsilon^2 \Delta ^2w+\sigma w=\mathcal{P}_N\widetilde{F}(u(\bm{x},t_n+s)), \quad s\in(0,\tau],\\
&w(\bm{x},0)=u_N(\bm{x},t_n),
\end{aligned}
\right.
\end{equation}
where 
\begin{equation}
\begin{aligned}
\widetilde{F}(u(\bm{x},t_n+s))&=\frac{\theta}{2} \Delta[\ln(1+u(\bm{x},t_n+s))-\ln(1-u(\bm{x},t_n+s))]-\theta_c \Delta u(\bm{x},t_n+s)\\
&\quad -k\Delta u(\bm{x},t_n+s)+\sigma \overline{u(\bm{x},t_n+s)}.
\end{aligned}
\end{equation} 
Denote by  $e_1(s)=W_1(s)-I^hw(s)$ and the difference between \eqref{ODE_ETD1} and  \eqref{ODE_exact_solution} leads to 
\begin{equation}\label{ODE_error_ETD1}
\left\{
\begin{aligned}
&\frac{\mathrm{d} e_1(s)}{\mathrm{d}s}+L_h e_1(s)=F(u^n)-F(u_N(t_n))+R_1(s), \quad s\in(0,\tau),\\
&e_1(0)=u^n-u_N(t_n)=e^n,
\end{aligned}
\right.
\end{equation}
where $R_1(s)$ is the truncation error satisfying
\begin{equation}\label{truncation1}
\left\lVert R_1(s)\right\rVert \leq C_e (\tau+h^2), \quad s\in(0,\tau) ,
\end{equation}
where $C_e$ depends on $u$, $\epsilon$, $\kappa$, $\theta_c$ and $\theta$. Note that we drop the operator $I^h$, which should appear in front of $u_N(t_n)$, for simplicity.
By using the variation-of-constants formula to  \eqref{ODE_error_ETD1} and setting $t=\tau$ leads to 
\begin{equation}\label{error_interger}
\widetilde{e}^{n+1}=\varphi_0(\tau L_h)e^n+\tau \varphi_1(\tau L_h) (F(u^n)-F(u_N(t_n)))+\int_{0}^{\tau}\mathrm{e}^{-(\tau-s)L_h}R_1(s)  \,\d s.
\end{equation}
Acting $I+\tau L_h$ on both sides of \eqref{error_interger} and taking the discrete $L^2$ inner product of the resulted equation with $\widetilde{e}^{n+1}$ yield 
\begin{equation}\label{3_5}
\begin{aligned}
&\left\lVert \widetilde{e}^{n+1}\right\rVert ^2+\epsilon^2 \tau \left\lVert \Delta_h \widetilde{e}^{n+1}\right\rVert ^2+\sigma \left\lVert \widetilde{e}^{n+1}\right\rVert ^2-\kappa \tau \langle \Delta_h \widetilde{e}^{n+1},\widetilde{e}^{n+1}\rangle  \\
=&\langle (I+\tau L_h)\varphi_0(\tau L_h),\widetilde{e}^{n+1}\rangle +\tau \langle (I+\tau L_h)\varphi_1(\tau L_h)(F(u^n)-F(u_N(t_n))), \widetilde{e}^{n+1}\rangle \\
&+\int_{0}^{\tau}\langle (I+\tau L_h)\mathrm{e}^{-(\tau-s)L_h}R_1(s),\widetilde{e}^{n+1} \rangle \d s.
\end{aligned}
\end{equation}
For the first term in the right hand \eqref{3_5}, by Lemma \ref{property2} and Schwartz's inequality, we have 
\begin{equation}\label{3_8}
\langle (I+\tau L_h)\varphi_0(\tau L_h)e^n,\widetilde{e}^{n+1}\rangle \leq \left\lVert (I+\tau L_h)\varphi_0(\tau L_h) \right\rVert \left\lVert e^n\right\rVert \left\lVert \widetilde{e}^{n+1}\right\rVert \leq \frac{1}{2} \left\lVert e^n\right\rVert ^2+\frac{1}{2} \left\lVert \widetilde{e}^{n+1}\right\rVert ^2 .
\end{equation} 
Note that $\overline{e^n}=0$, and thus, we have
\begin{equation}
F(u^n)-F(u_N(t_n))=\Delta_h \left(f(u^n)-f(u_N(t_n))\right).
\end{equation}
Under the assumption that $\left\lVert u_N(t_n)\right\rVert _\infty \leq 1-\delta$ and $\left\lVert u^n\right\rVert _\infty \leq 1-\delta$, and by Taylor expansion, we obtain 
\begin{equation}\label{nonlinear_estimate}
\left\lVert f(u^n)-f(u_N(t_n))\right\rVert =\left\lVert f'(\eta_n)e^n\right\rVert \leq \left\lvert \frac{\theta}{1-\eta_n^2} -\theta_c-\kappa \right\rvert \left\lVert e^n\right\rVert := C_1\left\lVert e^n\right\rVert ,
\end{equation}  
where the constant $C_1$ depends on $\delta$, $\theta$, $\theta_c$ and $\kappa$.
Combining Lemma \ref{property2}, Eq. \eqref{nonlinear_estimate} and Schwartz's inequality, the second term in the right hand of \eqref{3_5} can be bounded as
\begin{equation}\label{4_0}
\begin{aligned}
&\tau \langle (I+\tau L_h)\varphi_1(\tau L_h)(F(u^n)-F(u_N(t_n))), \widetilde{e}^{n+1}\rangle\\ 
&=\tau \langle (I+\tau L_h)\varphi_1(\tau L_h)(f(u^n)-f(u_N(t_n))),\Delta_h \widetilde{e}^{n+1}\rangle\\
&\leq \tau C_1 \left\lVert (I+\tau L_h)\varphi_1(\tau L_h) \right\rVert  \left\lVert e^n\right\rVert \left\lVert \Delta_h \widetilde{e}^{n+1}\right\rVert\\
& \leq \frac{C_1^2}{\epsilon^2} \tau \left\lVert e^n\right\rVert ^2+\epsilon^2 \tau \left\lVert \Delta_h \widetilde{e}^{n+1}\right\rVert ^2  .
\end{aligned}
\end{equation}
The last term of the right hand side of \eqref{3_5} can be estimated as 
\begin{equation}\label{4_1}
\begin{aligned}
\int_{0}^{\tau}\langle (I+\tau L_h)\mathrm{e}^{-(\tau-s)L_h}R_1(s),\widetilde{e}^{n+1} \rangle \d s&\leq \tau\left\lVert (I+\tau L_h)\mathrm{e}^{-(\tau-s)L_h}\right\rVert  \left\lVert R_1(s)\right\rVert \left\lVert \widetilde{e}^{n+1}\right\rVert \\
& \leq \frac{1}{2} \tau  \sup_{s\in (0,\tau)}\left\lVert R_1(s)\right\rVert ^2+\frac{1}{2} \tau \left\lVert \widetilde{e}^{n+1}\right\rVert ^2 ,
\end{aligned}
\end{equation}
where we have used Lemma \ref{property2} in the last inequality.
Substituting \eqref{3_8}-\eqref{4_1} into \eqref{3_5}, we obtain 
\begin{equation}
\frac{1}{2}(1-\tau ) \left\lVert \widetilde{e}^{n+1}\right\rVert ^2\leq \frac{1}{2}\left\lVert {e}^{n}\right\rVert ^2+ \frac{C_1^2}{\epsilon^2}\tau \left\lVert e^n\right\rVert ^2+\frac{1}{2} \tau  \sup_{s\in(0,\tau)}\left\lVert R_1(s)\right\rVert ^2.
\end{equation}
Using Lemma \ref{lagrange_error}, we have 
\begin{equation}
\frac{1}{2}(1-\tau ) \left\lVert {e}^{n+1}\right\rVert ^2\leq \frac{1}{2}\left\lVert {e}^{n}\right\rVert ^2+ \frac{C_1^2}{\epsilon^2}\tau \left\lVert e^n\right\rVert ^2+\frac{1}{2} \tau  \sup_{s\in(0,\tau)}\left\lVert R_1(s)\right\rVert ^2,
\end{equation}
that is 
\begin{equation}
(1-\tau ) \left\lVert {e}^{n+1}\right\rVert ^2\leq (1+\frac{2C_1^2}{\epsilon^2}\tau ) \left\lVert e^n\right\rVert ^2+\tau  \sup_{s\in(0,\tau)}\left\lVert R_1(s)\right\rVert ^2.
\end{equation}
When $\tau \leq \frac{1}{2}$, we obtain
\begin{equation}
\frac{1+ \frac{2C_1^2}{\epsilon^2} \tau}{1- \tau} \leq  1+\frac{4C_1^2+2\epsilon^2}{\epsilon^2}  \tau=1+C_2 \tau,
\end{equation}
hence
\begin{equation}
\left\lVert e^{n+1}\right\rVert ^2\leq (1+4C_2\tau) \left\lVert e^n\right\rVert ^2+2C_e^2\tau (\tau+h^2)^2. 
\end{equation}
Using the discrete Gronwall inequality, we obtain
\begin{equation}
\left\lVert e^{n+1}\right\rVert ^2\leq 2C_e^2\mathrm{e}^{4C_2 T}(\tau+h^2)^2,
\end{equation}
which completes the proof.
\end{proof}

\subsection{Error estimate for the P-ETDRK2 scheme }

\begin{theorem}
Given a fixed time $T>0$, and suppose that the exact solution for NCH equation \eqref{nonlocal_eq} is smooth enough. If $\tau \leq \frac{1}{2}  $ and $h$ is small sufficiently,  the numerical solution generated by the P-ETDRK2 scheme has the following error estimate:
\begin{equation}
\left\lVert u^n-I^hu(t_n)\right\rVert \leq C(\tau ^2+h^2),
\end{equation}
where the constant $C>0$ depends on $T$, $u$, $\epsilon$, $\kappa$, $\theta_c$ and $\theta$, but is independent of $\tau$  and $h$.
\end{theorem}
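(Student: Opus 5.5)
We follow the proof of Theorem \ref{convergence_theorem_ETD1} and add a preliminary stage that controls the intermediate P-ETD1 solution $u^{n+1}_{\text{mid}}$. By \eqref{projection_approximation} it is enough to bound $e^n=u^n-I^hu_N(t_n)$. Lemma \ref{first_order_mass_conservation} gives $\overline{u^{n+1}_{\text{mid}}}=\overline{u^n}$, so the error $e^{n+1}_{\text{mid}}:=u^{n+1}_{\text{mid}}-I^hu_N(t_{n+1})$ has zero mean. Hence Lemma \ref{lagrange_error} applies to the middle projection as well, because its proof only uses the KKT conditions, the zero mean, and $\|u_N\|_\infty\le 1-\delta$. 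By construction both projected quantities satisfy $\|u^{n+1}_{\text{mid}}\|_\infty,\|u^{n}\|_\infty\le1-\delta$. The Lipschitz bound \eqref{nonlinear_estimate} therefore holds for both without any a priori assumption, and we can write
\[
F(u^{n+1}_{\text{mid}})-F(u_N(t_{n+1}))=\Delta_h\bigl(f(u^{n+1}_{\text{mid}})-f(u_N(t_{n+1}))\bigr).
\]

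\textbf{Step 1 (middle stage).} This is exactly one P-ETD1 step started from $u^n$, compared with $u_N(t_{n+1})$. Repeating the estimate of Theorem \ref{convergence_theorem_ETD1} gives
\[
\|e^{n+1}_{\text{mid}}\|\le (1+C\tau)\|e^n\|+C\tau(\tau+h^2).
\]
Here the local truncation is $O(\tau)$ pointwise in $s$, and integrating over $[0,\tau]$ contributes one extra factor of $\tau$. This crude bound is enough, because in the final stage $F(u^{n+1}_{\text{mid}})$ is multiplied by $\tau\varphi_2$, which supplies another factor of $\tau$.

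\textbf{Step 2 (final stage).} Subtract \eqref{ODE_ETDRK2} from the exact-solution system \eqref{ODE_exact_solution}. The forcing is now the linear interpolant $(1-\frac{s}{\tau})F(u_N(t_n))+\frac{s}{\tau}F(u_N(t_{n+1}))$, and the interpolation remainder $R_2(s)$ satisfies $\|R_2(s)\|\le C_e(\tau^2+h^2)$ by Taylor expansion in time of the smooth function $\mathcal P_N\widetilde F(u)$. The variation-of-constants formula gives
\[
\widetilde e^{n+1}=\varphi_0 e^n+\tau(\varphi_1-\varphi_2)\Delta_h\bigl(f(u^n)-f(u_N^n)\bigr)+\tau\varphi_2\Delta_h\bigl(f(u^{n+1}_{\text{mid}})-f(u_N^{n+1})\bigr)+\int_0^\tau e^{-(\tau-s)L_h}R_2(s)\,\d s,
\]
with every $\varphi_i$ evaluated at $\tau L_h$. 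Apply $I+\tau L_h$ and take the inner product with $\widetilde e^{n+1}$, as in \eqref{3_5}. Lemma \ref{property2} bounds $(I+\tau L_h)\varphi_0$, $(I+\tau L_h)(\varphi_1-\varphi_2)$ and $(I+\tau L_h)\varphi_2$ in norm by $1$. Moving $\Delta_h$ onto $\widetilde e^{n+1}$ and using Young's inequality, the two nonlinear terms are bounded by
\[
\frac{C_1^2}{\epsilon^2}\tau\bigl(\|e^n\|^2+\|e^{n+1}_{\text{mid}}\|^2\bigr)+\epsilon^2\tau\|\Delta_h\widetilde e^{n+1}\|^2 .
\]
This uses the $\epsilon^2\tau\|\Delta_h\widetilde e^{n+1}\|^2$ term on the left, split into two halves. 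The remainder term is handled as in \eqref{4_1} and contributes $\frac12\tau\sup_s\|R_2(s)\|^2+\frac12\tau\|\widetilde e^{n+1}\|^2$.

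\textbf{Step 3 (closing).} Insert Step 1 in the form $\|e^{n+1}_{\text{mid}}\|^2\le C\|e^n\|^2+C\tau^2(\tau+h^2)^2$. After multiplication by $\tau$ this becomes $C\tau\|e^n\|^2+C\tau(\tau^2+h^2)^2$, using $\tau^2(\tau+h^2)^2\le C(\tau^2+h^2)^2$. Then apply Lemma \ref{lagrange_error} to pass from $\widetilde e^{n+1}$ to $e^{n+1}$, and use $\tau\le\frac12$ to absorb the factor $1-\tau$. This yields
\[
\|e^{n+1}\|^2\le(1+C\tau)\|e^n\|^2+C\tau(\tau^2+h^2)^2,
\]
and the discrete Gronwall inequality finishes the proof.

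The main obstacle is the truncation estimate $\|R_2(s)\|=O(\tau^2+h^2)$. It requires the second time derivative of $\widetilde F(u)$ to be bounded, and hence uses the separation $\|u\|_\infty\le1-2\delta$ together with the smoothness of $u$. It also requires care that the stage error is only multiplied by $\tau\varphi_2$, so the first-order accuracy of the middle stage costs only one power of $\tau$.
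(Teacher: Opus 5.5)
Your proposal is correct and follows the paper's proof essentially step for step. It uses a P-ETD1-type bound $\|u^{n+1}_{\text{mid}}-u_N(t_{n+1})\|^2\le C\|e^n\|^2+C\tau^2(\tau+h^2)^2$ for the stage error, keeping the extra factor $\tau$ on the remainder (which the paper gets by re-weighting Young's inequality in \eqref{4_1}), then the $(I+\tau L_h)$-energy estimate of the final stage with Lemma~\ref{property2}(1), the projection bound $\|u^{n+1}_{\text{mid}}\|_\infty\le 1-\delta$, Lemma~\ref{lagrange_error} and the discrete Gronwall inequality. One caveat you share with the paper: Lemma~\ref{property2}(2) is false as stated, since as $s\to\tau$ the factor $(1+a\tau)\mathrm{e}^{-a(\tau-s)}$ tends to $1+a\tau$; it is safer to bound $\bigl\|\int_0^\tau \mathrm{e}^{-(\tau-s)L_h}R_2(s)\,\d s\bigr\|\le\tau\sup_s\|R_2(s)\|$ on its own, run the energy argument only on the rest of $\widetilde e^{n+1}$, and recombine with the triangle inequality.
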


\begin{proof}
Similar to the error analysis of the P-ETD1 scheme, we only need to estimate $\left\lVert u^n-I^h u_N(t_n)\right\rVert$.
Let $e_2(s)=W_2(s)-I^hw(s)$. Taking the difference between \eqref{ODE_ETDRK2} and \eqref{ODE_exact_solution} yields the error equation for $s\in (0,\tau]$:
\begin{equation}\label{ODE_error_ETDRK2}
\left\{
\begin{aligned}
&\frac{\mathrm{d} e_2(s)}{\mathrm{d}s}+L_h e_2(s)=(1-\frac{s}{\tau} )(F(u^n)-F(u_N(t_n)))+\frac{s}{\tau}  (F(u^{n+1}_{mid})-F(u_N(t_{n+1}))) +R_2(s),\\
&e_2(0)=u^n-u_N(t_n)=e^n,
\end{aligned}
\right.
\end{equation}
where the truncation error $ R_2(s)$ satisfying  
\begin{equation}\label{truncation2}
\left\lVert R_2(s)\right\rVert \leq C_e(\tau^2+h^2), \quad s \in (0,\tau),
\end{equation}
under the assumption that $\left\lVert u_N\right\rVert _\infty \leq 1-\delta$.
Using variation-of-constants formula  to \eqref{ODE_error_ETDRK2} and setting $t=\tau$ leads to 
\begin{equation}\label{5_4}
\begin{aligned}
\widetilde{e}^{n+1}=&\varphi_0(\tau L_h)e^n +\tau (\varphi_1-\varphi_2)(\tau L_h) (F(u^n)-F(u_N(t_n)))+\tau \varphi_2(\tau L_h) (F(u^{n+1}_{\text{mid}})-F(u_N(t_{n+1})))\\
&+\int_{0}^{\tau }\mathrm{e}^{-(\tau-s)L_h}R_2(s) \,\d s .
\end{aligned}
\end{equation}
Acting $I+\tau L_h$ on both sides of \eqref{5_4} and taking the discrete $L^2$ inner product of the resulted equation with $\widetilde{e}^{n+1}$ give us 
\begin{equation}\label{5_5}
\begin{aligned}
&\left\lVert \widetilde{e}^{n+1}\right\rVert ^2+\epsilon^2 \tau  \left\lVert \Delta_h \widetilde{e}^{n+1}\right\rVert ^2 -\kappa \tau \langle \Delta_h \widetilde{e}^{n+1},\widetilde{e}^{n+1}\rangle +\sigma \left\lVert \widetilde{e}^{n+1}\right\rVert ^2\\
&=\langle (I+\tau L_h)\varphi_0(\tau L_h)e^n, \widetilde{e}^{n+1}\rangle +\tau \langle (I+\tau L_h)(\varphi_1-\varphi_2)(\tau L_h)(F(u^n)-F(u_N(t_n))),\widetilde{e}^{n+1}\rangle\\
&\quad +\tau \langle (I+\tau L_h)\varphi_2(\tau L_h)(F(u^{n+1}_{\text{mid}})-F(u_N(t_{n+1}))), \widetilde{e}^{n+1}\rangle \\
&\quad +\int_{0}^{\tau} \langle (I+\tau L_h)\mathrm{e}^{-(\tau-s)L_h}R_2(s),\widetilde{e}^{n+1}\rangle  \,\d s .
\end{aligned}
\end{equation}
Similar to Eq.\eqref{3_8}, the first term on the right hand of \eqref{5_5} indicates  that 
\begin{equation}\label{5_6}
\langle (I+\tau L_h)\varphi_0(\tau L_h)e^n,\widetilde{e}^{n+1}\rangle \leq \frac{1}{2} \left\lVert e^n\right\rVert ^2+\frac{1}{2}  \left\lVert \widetilde{e}^{n+1}\right\rVert^2.
\end{equation} 
By \eqref{nonlinear_estimate}, Lemma \ref{property2} and Schwartz's inequality we have 
\begin{equation}\label{5_7}
\begin{aligned}
&\tau \langle (I+\tau L_h)(\varphi_1(\tau L_h)-\varphi_2(\tau L_h))(F(u^n)-F(u_N(t_n))), \widetilde{e}^{n+1}\rangle\\
&=\tau \langle (I+\tau L_h)(\varphi_1(\tau L_h)-\varphi_2(\tau L_h))(f(u^n)-f(u_N(t_n))), \Delta_h \widetilde{e}^{n+1}\rangle\\
&\leq \tau C_1 \left\lVert e^n\right\rVert \left\lVert \Delta_h \widetilde{e}^{n+1}
\right\rVert \\
& \leq \frac{C_1^2}{2\epsilon^2}  \tau \left\lVert e^n\right\rVert ^2+\frac{\epsilon^2}{2}\tau \left\lVert \Delta_h\widetilde{e}^{n+1}\right\rVert ^2.
\end{aligned}
\end{equation}
For the third term on the right side of \eqref{5_5}, $u^{n+1}_{\text{mid}}$ is the  numerical solution for P-ETD1 scheme with $\left\lVert u^{n+1}_{\text{mid}} \right\rVert _\infty \leq 1-\delta $. In \eqref{4_1}, if we adopt the estimate 
\begin{equation}
\left\lVert R_1(s)\right\rVert \left\lVert \widetilde{e}^{n+1}\right\rVert  \leq \tau^2 \sup_{s\in(0,\tau)}\left\lVert R_1(s)\right\rVert ^2+\frac{1}{4}  \left\lVert \widetilde{e}^{n+1}\right\rVert ^2, 
\end{equation}
then we have the following error estimate 
\begin{equation}\label{5_9}
\begin{aligned}
\left\lVert u_{\text{mid}}^{n+1}-u_N(t_{n+1})\right\rVert ^2&\leq (2+\frac{2C_1^2}{\epsilon^2}\tau )\left\lVert e^n\right\rVert ^2+2\tau^2 \sup_{s\in(0,\tau)}\left\lVert R_1(s)\right\rVert ^2 \\
& \leq(2+\frac{4C_1^2}{\epsilon^2}\tau )\left\lVert e^n\right\rVert ^2+2C_e^2\tau^2 (\tau+h^2)^2.
\end{aligned} 
\end{equation}
Thus, Schwartz's inequality and above estimates \eqref{5_9}, we have 
\begin{equation}\label{6_0}
\begin{aligned}
&\tau \langle (I+\tau L_h)\varphi_2(\tau L_h)(F(u^{n+1}_{\text{mid}})-F(u_N(t_{n+1}))), \widetilde{e}^{n+1}\rangle \\
&=\tau \langle (I+\tau L_h)\varphi_2(\tau L_h)(f(u^{n+1}_{\text{mid}})-f(u_N(t_{n+1}))), \Delta _h\widetilde{e}^{n+1}\rangle \\
&\leq C_1\tau \left\lVert u^{n+1}_{\text{mid}}-u_N(t_{n+1})\right\rVert \left\lVert \Delta_h \widetilde{e}^{n+1}\right\rVert  \\
&\leq\frac{C_1^2}{2\epsilon^2} \tau \left\lVert u^{n+1}_{\text{mid}}-u_N(t_{n+1})\right\rVert ^2+\frac{\epsilon^2 \tau}{2}\left\lVert \Delta_h \widetilde{e}^{n+1}\right\rVert ^2 \\
&\leq \frac{C_1^2}{2\epsilon^2}(2+\frac{2C_1^2}{\epsilon^2}\tau )\tau \left\lVert e^n\right\rVert ^2 +\frac{2C_1^2C_e}{\epsilon^2}\tau^3 (\tau+h^2) ^2+\frac{\epsilon^2 \tau}{2}\left\lVert \Delta_h \widetilde{e}^{n+1}\right\rVert ^2 \\
&\leq \frac{C_1^2}{\epsilon^2}(2+\frac{2C_1^2}{\epsilon^2}\tau )\tau \left\lVert e^n\right\rVert ^2 +\frac{2C_1^2C_e}{\epsilon^2}\tau (\tau^2+h^2) ^2+\frac{\epsilon^2 \tau}{2}\left\lVert \Delta_h \widetilde{e}^{n+1}\right\rVert ^2.
\end{aligned}
\end{equation}
For the last term of the right hand of \eqref{5_5}, using the estimate for the truncation error \eqref{truncation2} we conclude that 
\begin{equation}\label{6_1}
\begin{aligned}
\int_{0}^{\tau} \langle (I+\tau L_h)\mathrm{e}^{-(\tau-s)L_h} R_2(s),\widetilde{e}^{n+1}\rangle  \,\d s &\leq \tau \left\lVert (I+\tau L_h)\mathrm{e}^{-(\tau-s)L_h}\right\rVert \left\lVert R_2(s)\right\rVert \left\lVert \widetilde{e}^{n+1}
\right\rVert   \\
&\leq \frac{1}{2} \tau \sup_{s\in(0,\tau)}\left\lVert R_2(s)\right\rVert ^2+\frac{1}{2} \tau \left\lVert \widetilde{e}^{n+1}\right\rVert ^2\\
&\leq  C_e^2 \tau (\tau^2+h^2)^2+\frac{1}{2}\tau \left\lVert \widetilde{e}^{n+1}\right\rVert ^2.
\end{aligned}
\end{equation}
Substituting \eqref{5_6}-\eqref{6_1} to \eqref{5_5}, we obtain 
\begin{equation}
(1-\tau)\left\lVert \widetilde{e}^{n+1}\right\rVert ^2\leq(1+\frac{C_1^2}{\epsilon^2}\tau+\frac{C^2}{\epsilon^2}(2+\frac{2C_1^2}{\epsilon^2}\tau ) \tau  ) \left\lVert e^n\right\rVert ^2 +C_3\tau (\tau^2+h^2)^2,
\end{equation}
where $C_3=\max\{\frac{4C_1^2C_e^2}{\epsilon^2},2C_e^2\}$.
Using Lemma \ref{lagrange_error}, we have 
\begin{equation}
(1-\tau)\left\lVert {e}^{n+1}\right\rVert ^2\leq(1+\frac{C_1^2}{\epsilon^2}\tau+\frac{C_1^2}{\epsilon^2}(2+\frac{2C_1^2}{\epsilon^2}\tau ) \tau  ) \left\lVert e^n\right\rVert ^2 +C_3\tau (\tau^2+h^2)^2.
\end{equation}
When $\tau \leq \frac{1}{2} $, similar to the last paragraph in the proof of Theorem \ref{convergence_theorem_ETD1}, applying the discrete Gronwall's inequality yields
\begin{equation}
    \left\lVert e^{n+1}\right\rVert ^2 \leq C(\tau^2+h^2)^2,
\end{equation}
which completes the proof.
\end{proof}

\section{Numerical experiments}\label{experiments}

In this section, we perform several numerical examples to demonstrate the performance of our proposed schemes. 

\subsection{Convergence in time}

We first verify the convergence order for the proposed P-ETD schemes. Let us set $\epsilon=0.02$, $\theta=0.8$, $\theta_c=1.6$, $\delta=0.05$, $\kappa=1$ and take a smooth initial value
\begin{equation}
u^0(x,y)=0.1\sin{2\pi x}\sin{2\pi y}.
\end{equation}
The temporal convergence tests are conducted by fixing the spatial mesh size $h=1/512$. We compute the numerical solution at $T=0.02$ using the P-ETD1 and P-ETDRK2 scheme with various time step size $\tau=2^{-k}\times  10^{-4}$, $k=0,1,\cdots ,4$. To compute the numerical errors, we treat P-ETDRK2 solution obtained by $\tau=10^{-6}$ as the benchmark solution. The discrete $L^2$-norm errors and corresponding convergence rates computed by P-ETD1 scheme  and P-ETDRK2 scheme are presented in 
Table \ref{convergencePETD1} and Table \ref{convergencePETDRK2}, respectively. The first-order temporal accuracy for P-ETD1 scheme and second-order for P-ETDRK2 scheme are observed as expected.

\begin{table}[h]
\centering
\caption{The discrete $L^2$-norm errors and convergence rates produced by the proposed P-ETD1 scheme for NCH equation }\label{convergencePETD1}
\begin{tabular}{l*{6}{c}}
\bottomrule
$\tau$ & $10^{-4}$ & $10^{-4}/2$ & $10^{-4}/4$ & $10^{-4}/8$ & $10^{-4}/16$ \\
\midrule
$L^2$ Error  & 1.6724E-01
&	9.5485E-02
&5.0341E-02
&	2.5032E-02
&1.1642E-02
\\
Rate &- &0.8086  &0.9236 &1.0079 &1.1045 \\
\bottomrule
\end{tabular}

\end{table}

\begin{table}[h]
\centering
\caption{The discrete $L^2$-norm errors and convergence rates produced by the proposed P-ETDRK2 scheme for NCH equation }\label{convergencePETDRK2}
\begin{tabular}{l*{6}{c}}
\bottomrule
$\tau$ & $10^{-4}$ & $10^{-4}/2$ & $10^{-4}/4$ & $10^{-4}/8$ & $10^{-4}/16$ \\
\midrule
$L^2$ Error  & 1.5449E-02
&	4.1488E-03
&  1.0713E-03
& 2.7083E-04
& 6.6914E-05
&	\\
Rate  &-& 1.8968 & 1.9534 & 1.9839 & 2.0170\\
\bottomrule
\end{tabular}

\end{table}

\subsection{Comparisons with the P-ETD schemes and the classical ETD schemes }

In the following numerical experiments, we compare the proposed P-ETD schemes with the classical ETD schemes by focusing on the evolutions of supremum norms for the numerical solution. The dynamic process considered usually needs a long-time evolution to reach the steady state; here we conduct simulation in a short time interval for the comparison among these schemes.

Let us consider the problem with $\epsilon=0.02$. We adopt the uniform spatial mesh size $h=1/512$, time step size $\tau=0.1$ and give the initial value 
\begin{equation}
u^0(x,y)=0.2+0.05\text{rand}(x,y),
\end{equation}
where the function $\text{rand}(x,y)$ is a uniform distributed random function with values in $(-1,1)$.
We set $\theta_c=1.6$, $\theta=0.8$, $\delta =0.05$,  $\sigma=30$ and compute the numerical solution by using the P-ETD schemes and the classic ETD schemes (ETD1 and ETDRK2). Figure \ref{comparison_ETD2} shows the evolutions of the supremum norms and the mass increment computed by the classic ETD1 and P-ETD1 schemes. It is observed that P-ETD1 scheme preserves the numerical solution between $-1$ and $1$ peer and holds the mass conservation property. However, the supremum norm of the classic ETD1 schemes evolves beyond 1 at some time and once the numerical solution exceeds $1$, complex numerical solutions will appear due to the existence of the logarithmic term and give the completely wrong dynamics. The number of iterations at the correction step obtained by P-ETD1 scheme and corresponding Lagrange multiplier $\lambda_h$ are presented in Figure \ref{lagrange_multiplier1}. The Lagrange multiplier $\lambda_h$ is not large, but it plays an important role in making the numerical solution located in $(-1,1)$.

Figure \ref{comparison_ETD3} and Figure \ref{lagrange_multiplier2} plot corresponding results computed by the second-order classic ETDRK2 scheme and P-ETDRK2 method. Again, only the P-ETDRK2 scheme preserves bound and mass conservation. The classic ETDRK2 scheme having the supremum norm beyond 1, leads to inaccurate dynamic process. 


\begin{figure}[h!]
\centering
{\includegraphics[width=8cm]{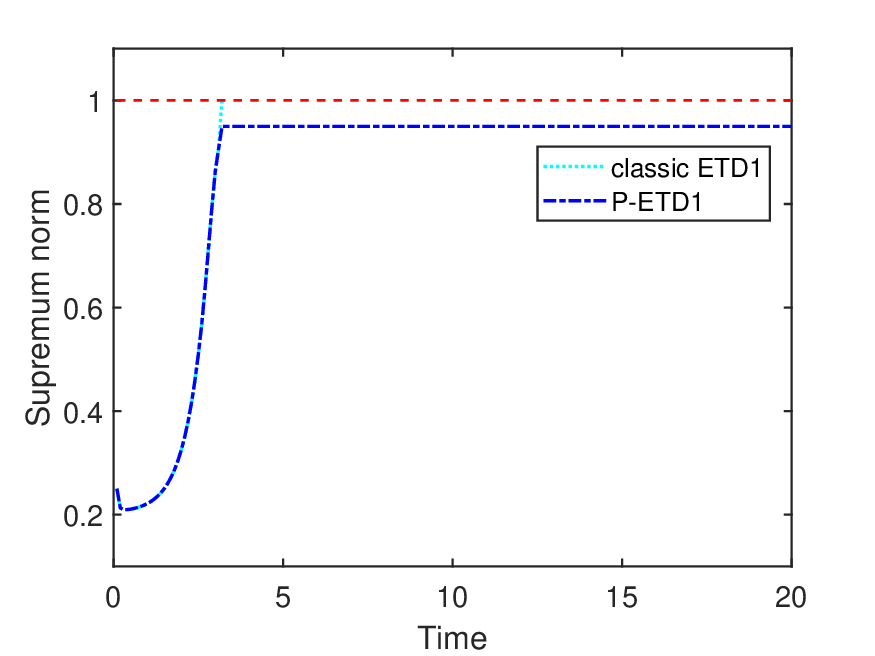}}
{\includegraphics[width=8cm]{
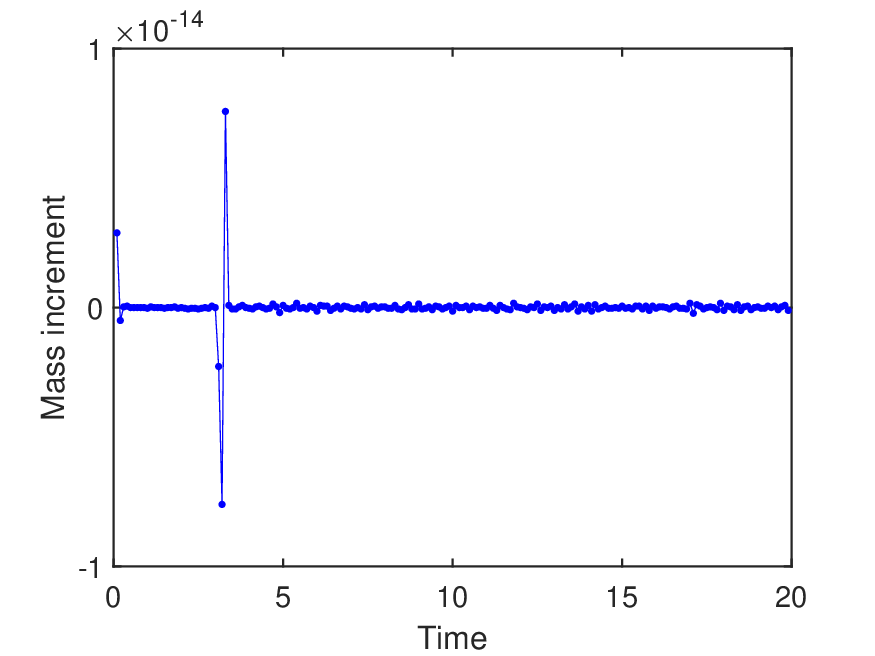}}
\caption{The evolutions of the supremum norms and mass increment of the numerical solution generated by the classic ETD1 scheme and P-ETD1 scheme with $\kappa=2$, $\tau=0.1$, $h=1/512$, $\epsilon=0.0 2$, $\theta_c=1.6$, $\theta=0.8$, $\sigma=30$ }
\label{comparison_ETD2}
\end{figure}

\begin{figure}[h!]
\centering
{\includegraphics[width=8.1cm]{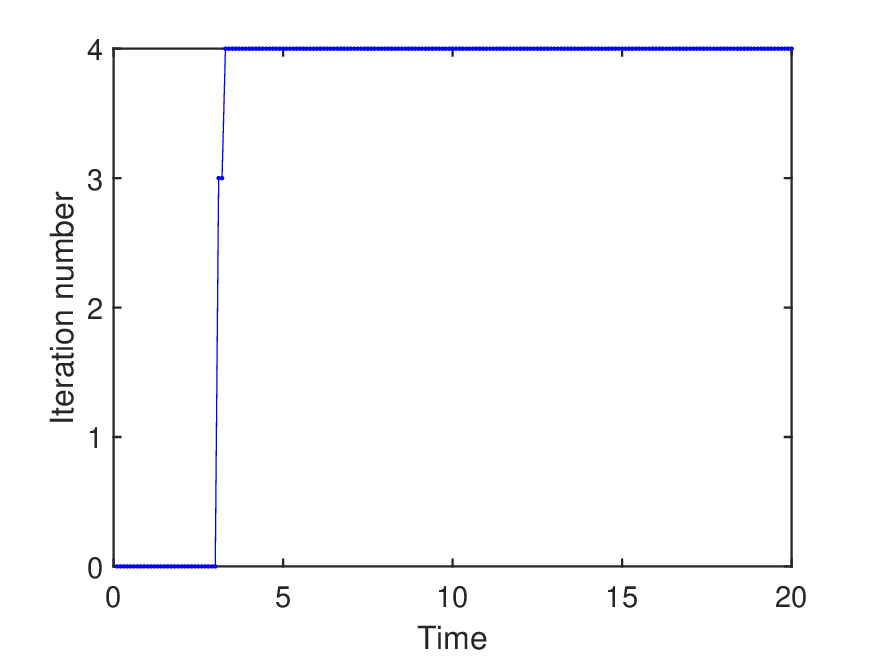 }}
{\includegraphics[width=8.1cm]{
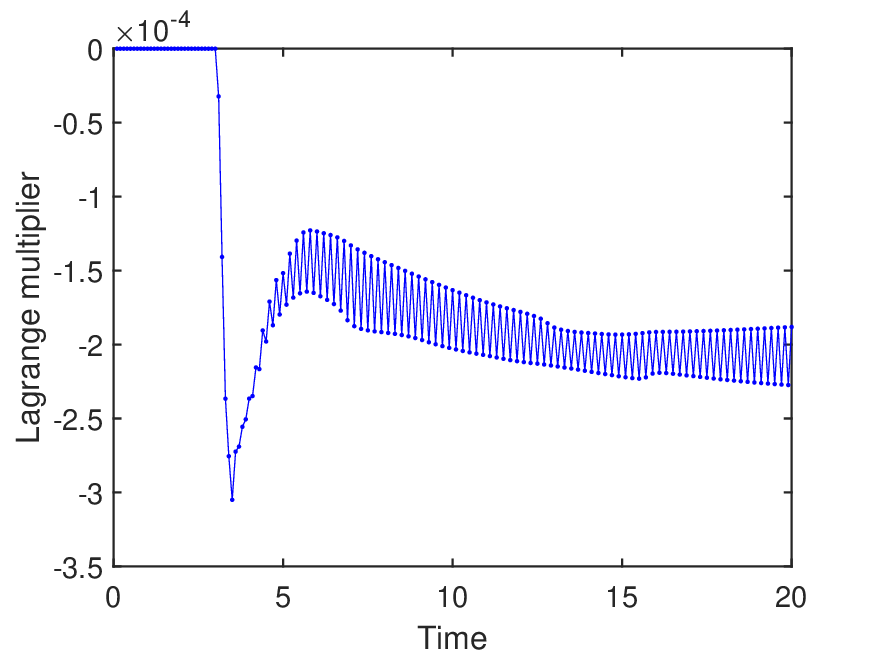
}}
\caption{The evolutions of the iteration number (left) and the Lagrange multiplier (right) generated by P-ETD1 scheme with $\kappa=2$, $\tau=0.1$, $h=1/512$, $\epsilon=0.0 2$, $\theta_c=1.6$, $\theta=0.8$, $\sigma=30$ }
\label{lagrange_multiplier1}
\end{figure}

\begin{figure}[h!]
\centering
{\includegraphics[width=8.1cm]{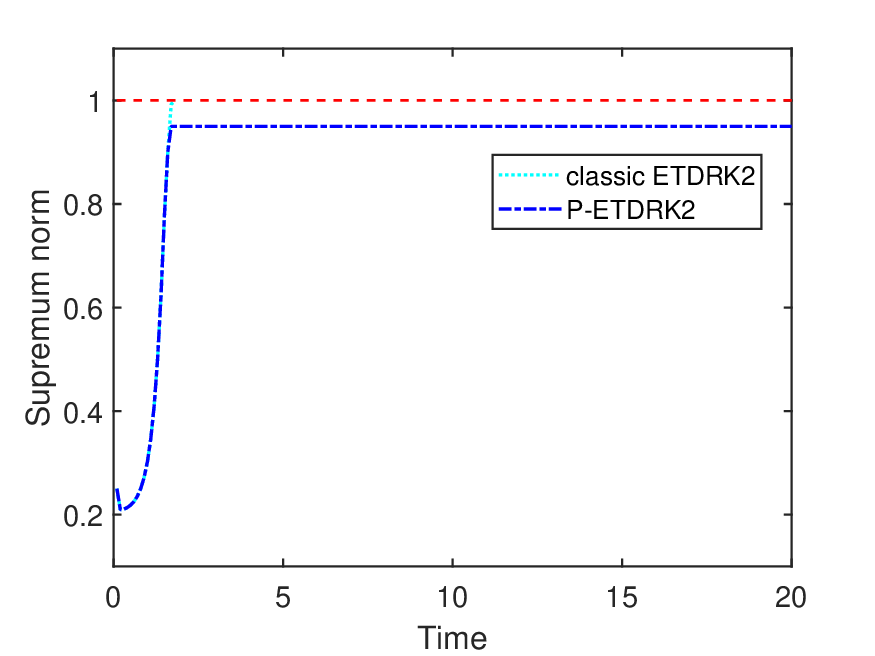}}
{\includegraphics[width=8.1cm]{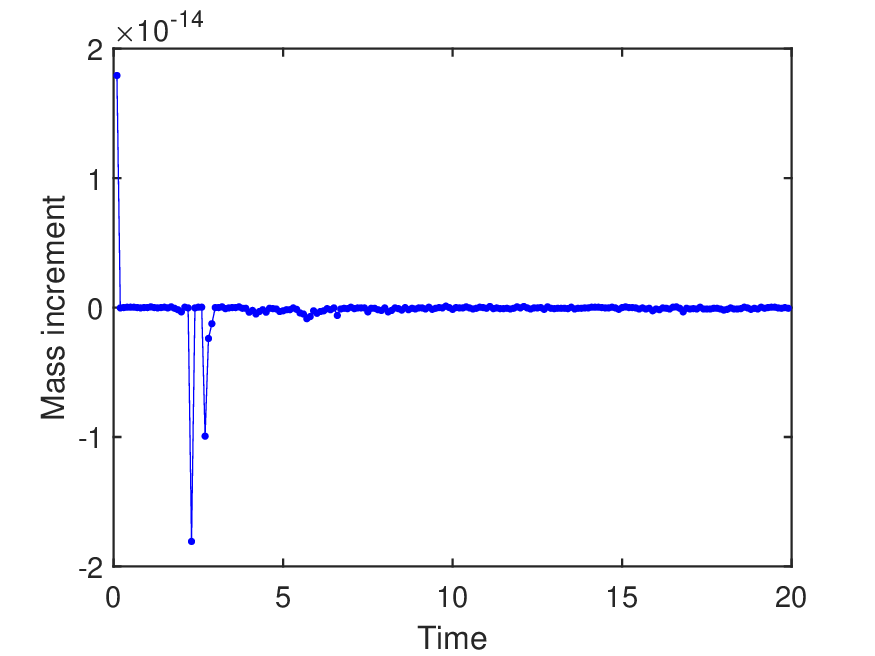}}
\caption{The evolutions of the supremum norms  and mass increment of the numerical solution generated by the classic ETDRK2 scheme and P-ETDRK2 scheme with $\kappa=2$, $\tau=0.1$, $h=1/512$, $\epsilon=0.02$, $\theta_c=1.6$, $\theta=0.8$, $\sigma=30$ }
\label{comparison_ETD3}
\end{figure}

\begin{figure}[h!]
\centering
{\includegraphics[width=8.1cm]{
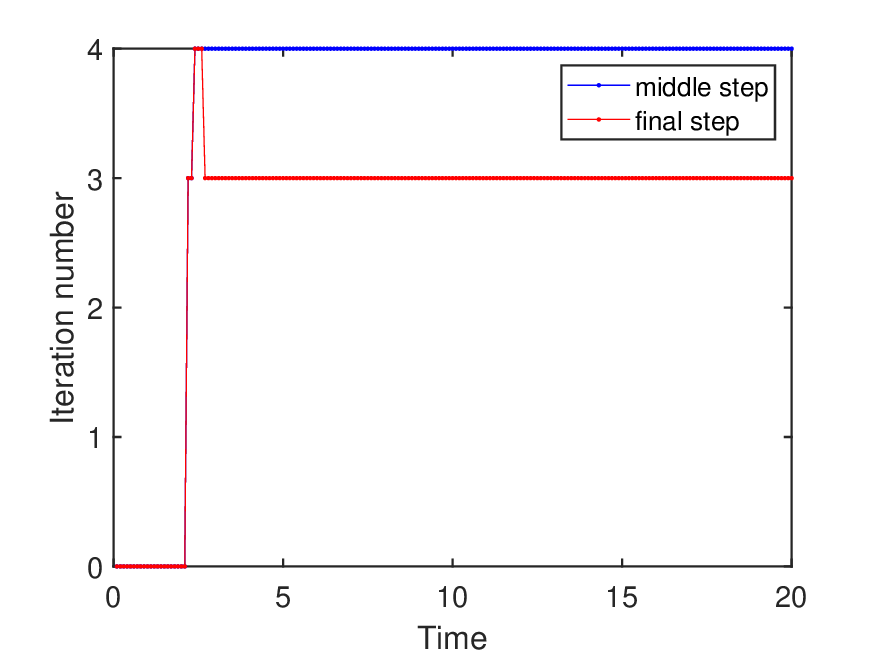}}
{\includegraphics[width=8.1cm]{
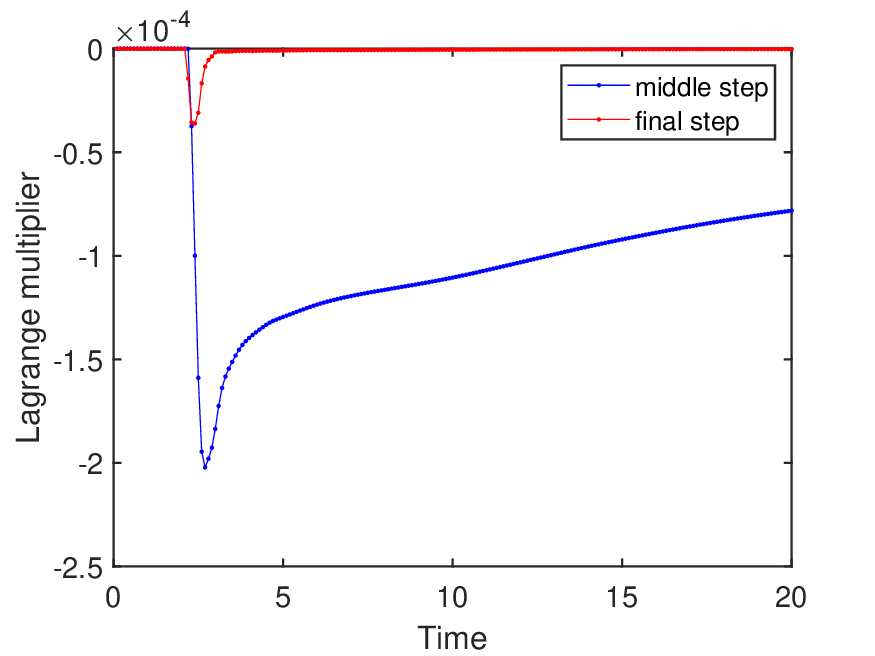}}
\caption{The evolutions of the iteration number (left) and the Lagrange multiplier (right) generated by P-ETDRK2 scheme with  $\kappa=2$, $\tau=0.1$, $h=1/512$, $\epsilon=0.0 2$, $\theta_c=1.6$, $\theta=0.8$, $\sigma=30$ }
\label{lagrange_multiplier2}
\end{figure}

\subsection{Long time Coarsening Dynamics Simulations} \label{coarsening_dynamics}

Now we study the coarsening dynamics driven by the NCH equation \eqref{nonlocal_eq} with $\epsilon=0.02$. The spatial mesh size is $h=1/512$ and the initial value is given by 
\begin{equation}\label{random_initial}
u^0(x,y)=0.3+0.05\text{rand}(x,y),
\end{equation}
where the function $\text{rand}(x,y)$ is a uniform distributed random function with values in $(-1,1)$. 
We adopt the P-ETDRK2 scheme with $\tau=0.1$ to simulate the long time coarsening process. The nonlocal constant is change to $\sigma=70$, and other parameters are the same as the convergence test.
The phase structures captured at $t=10$, $20$, $50$, $100$, $200$  and $2000$ are presented at Figure \ref{phase_structure}. 
It can be observed that, at the early stage of phase separation, the system is dominated by spherical structures with only a few stripe-like structures; as the evolution proceeds, the system eventually reaches equilibrium and exhibits a pure spherical phase.
The left picture given in Figure \ref{energy_mass} implies the energy dissipation during the whole phase transition process. The evolution of mass increment is plotted in the right graph of 
Figure \ref{energy_mass}, which states the mass conservation of the process. 

\begin{figure}[h!]
\centering
{\includegraphics[height=5cm,width=5.49cm]{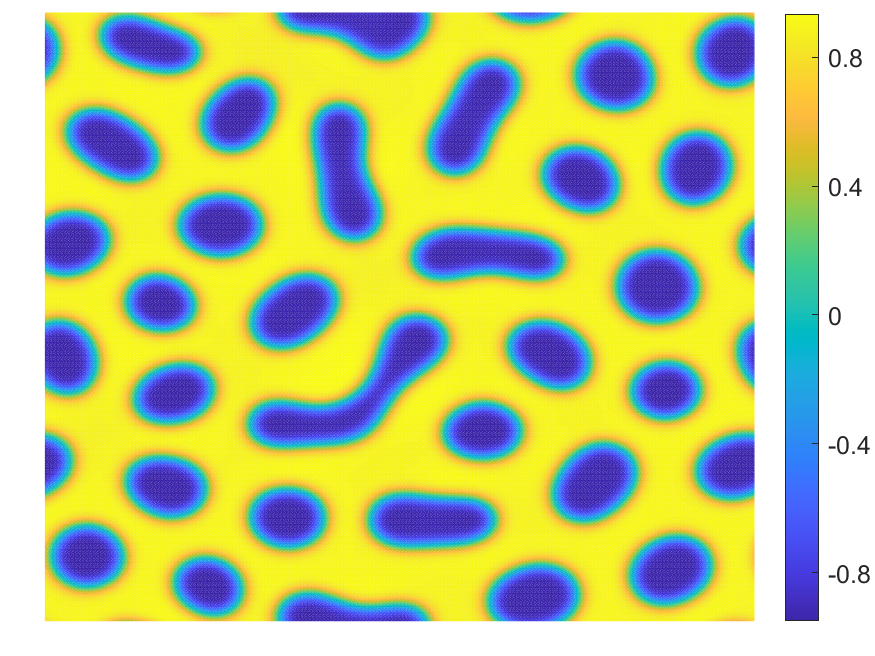}}
\hspace{-3mm}
{\includegraphics[height=5cm,width=5.49cm]{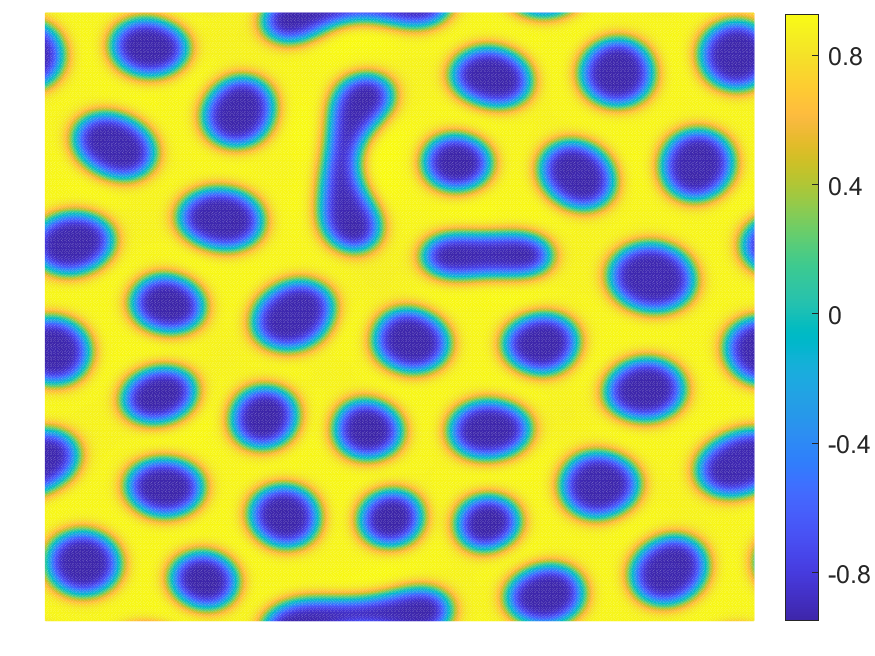}}
\hspace{-3mm}
{\includegraphics[height=5cm,width=5.49cm]{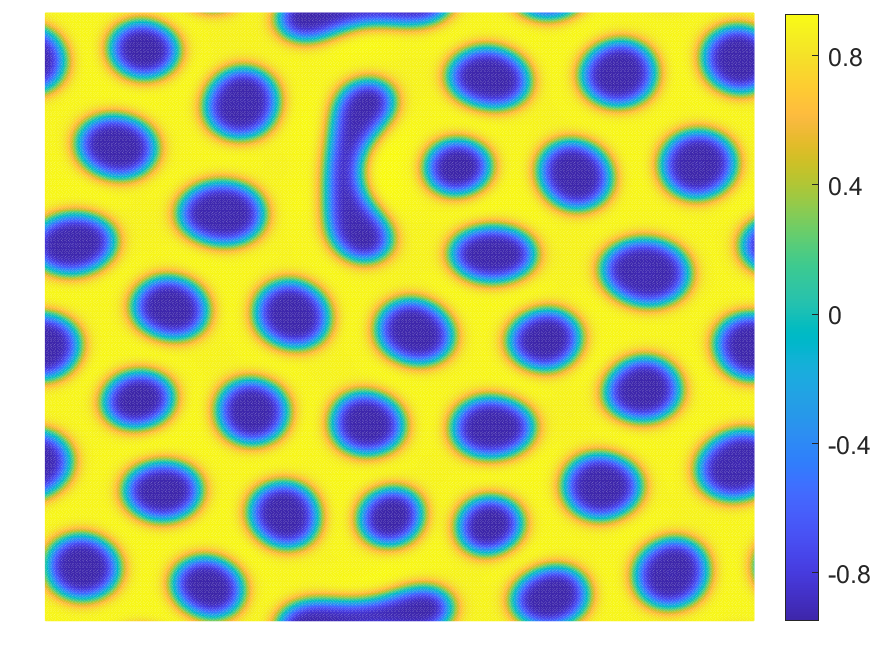}}
\hspace{-3mm}
{\includegraphics[height=5cm,width=5.49cm]{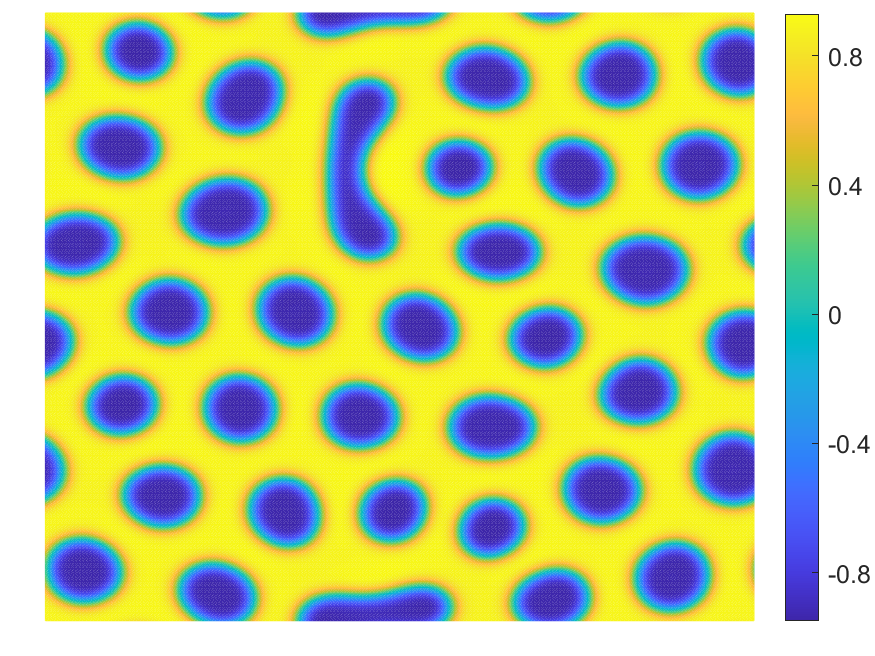}}
\hspace{-3mm}
{\includegraphics[height=5cm,width=5.49cm]{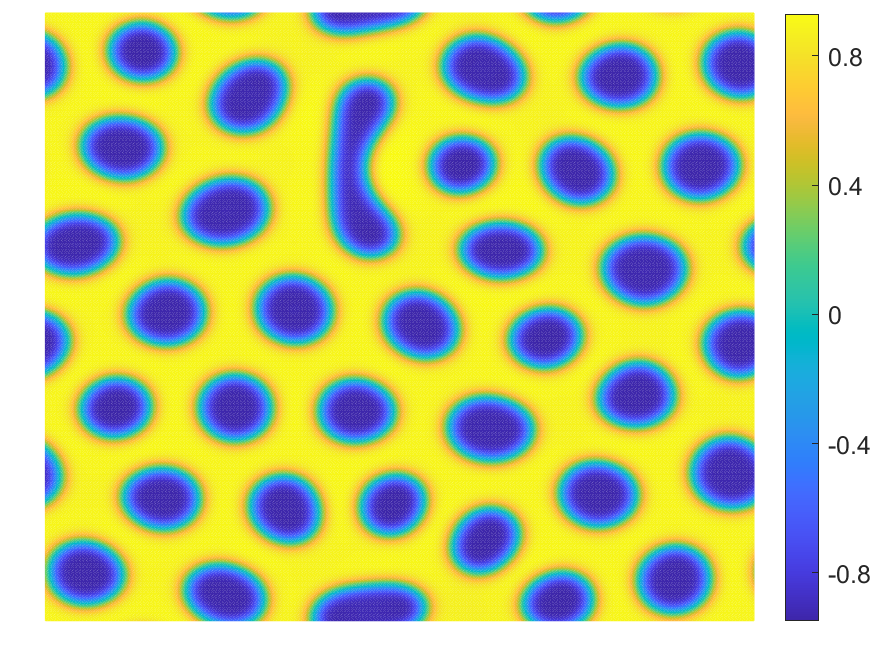}}
\hspace{-3mm}
{\includegraphics[height=5cm,width=5.49cm]{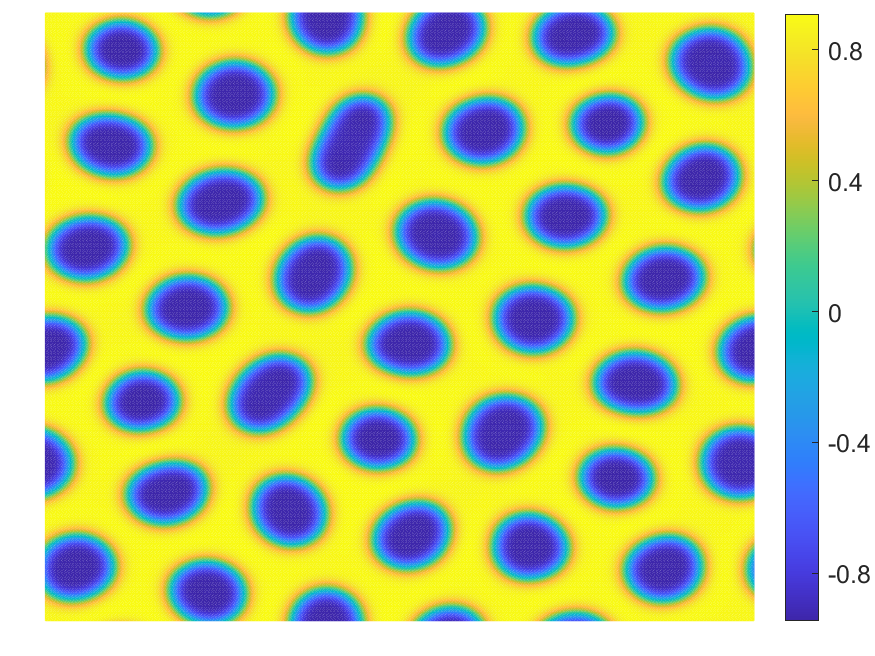}}
\caption{The phase structures at $t=10$, $20$, $50$, $100$, $200$, and $2000$, respectively (left to right and top to bottom) generated by the P-ETDRK2 scheme with $\kappa=2$, $\tau=0.1$, $h=1/512$, $\epsilon=0.02$, $\theta_c=1.6$, $\theta=0.8$, $\sigma=70$ }
\label{phase_structure}
\end{figure}

\begin{figure}[h!]
\centering
{\includegraphics[width=8.1cm]{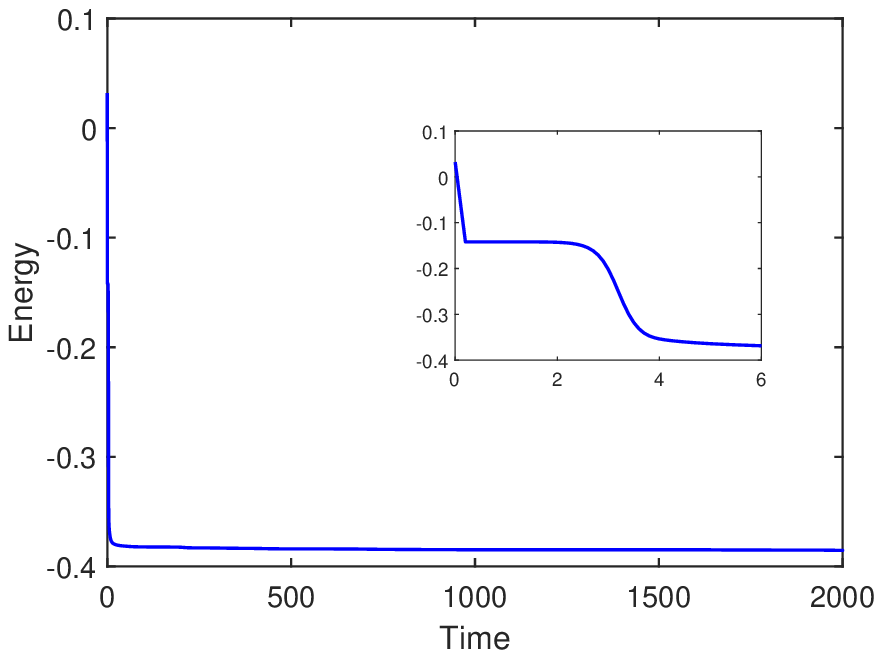}}
{\includegraphics[width=8.1cm]{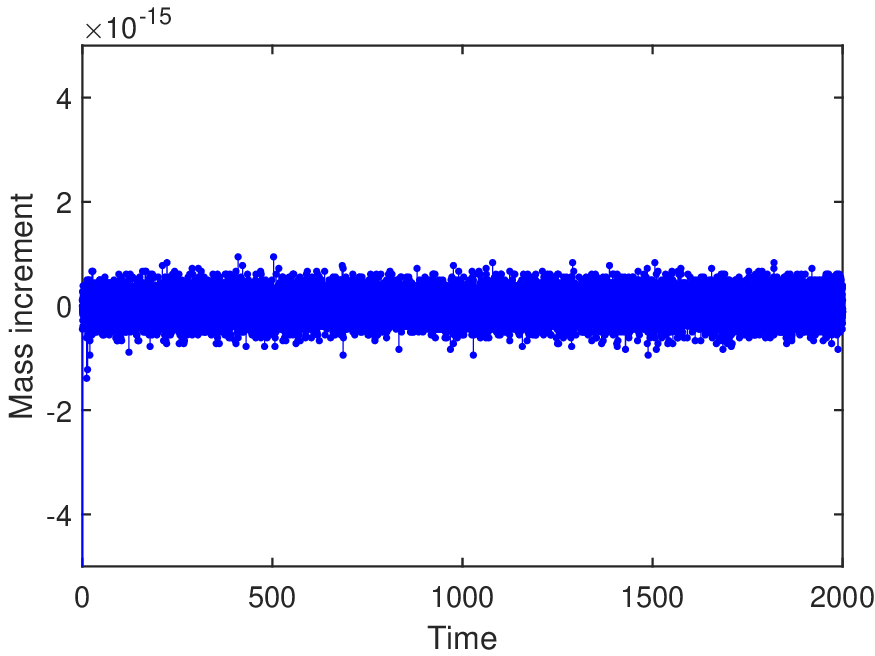}}
\caption{The evolutions of the energy and the mass increment of the numerical solution generated by the P-ETDRK2  scheme with $\kappa=2$, $\tau=0.1$, $h=1/512$, $\epsilon=0.0 2$, $\theta_c=1.6$, $\theta=0.8$, $\sigma=70$ }
\label{energy_mass}
\end{figure}

\subsection{The effect of the nonlocal constant $\sigma$}

The nonlocal constant $\sigma$ represents the long-range interactions, which is inversely proportional to the index of polymerization of the block copolymers. This implies the monomers are unlikely to aggregate, under the condition that $\sigma$ is relatively large. Consequently, as the nonlocal parameter $\sigma$ increases, a greater number of spherical structures emerge in the sphere phase in the domain. 

Now we investigate the effect of the nonlocal constant $\sigma$ on the phase structure. The nonlocal constant is chosen as $\sigma=1$, $5$, $10$, $20$, $30$, $40$, $60$, $70$, other parameters and the initial value is the same as the parameter chosen in Subsection \ref{coarsening_dynamics}. From phase structures depicted in Figure \ref{effect_sigma}, we observe that the number of the spherical structures $N_{\sigma}$ change as $N_\sigma=5$, $11$, $18$, $29$, $35$, $44$, $57$, $62$ when $\sigma=1$, $5$, $10$, $20$, $30$, $40$, $60$, $70$, respectively.  More importantly, the relation between the number of the spherical structures $N_\sigma$ and the nonlocal constant $\sigma$ can be approximately described as $N_\sigma \sim O(\sigma^{2/3})$, which is consistent with the theoretical analysis in \cite{OK1986}.

\begin{figure}[h!]\label{effect_sigma}
\hspace{-25mm}
{\includegraphics[scale=0.85
]{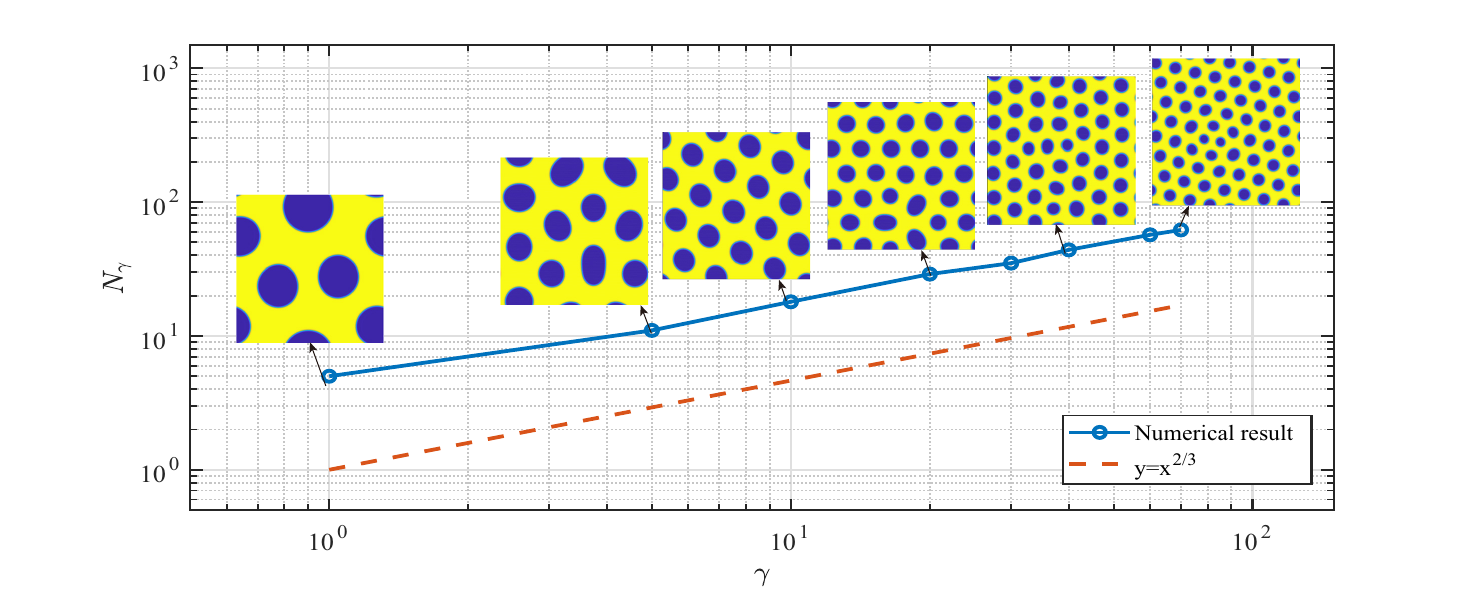}}
\hspace{-30mm}
\caption{The log-log plot of dependence the number of spherical structure on $\sigma$ with $\kappa=2$, $\tau=0.1$, $h=1/512$, $\epsilon=0.02$, $\theta_c=1.6$, $\theta=0.8$ and $\sigma=1$, $5$, $10$, $20$, $30$, $40$, $60$, $70$ }
\end{figure}

\section{Conclusion}\label{conclusion}

In this work, we design the efficient numerical schemes for solving the nonlocal Cahn-Hilliard equation with Flory--Huggins potential by using first- and second-order P-ETD method in time and central finite difference in space. In addition, the fully-discrete convergence analysis for both first- and second-order schemes is presented. Furthermore, the theoretical findings is verified by the numerical experiments in different cases especially in the aspects of  preserving bound and mass conservation. Finally, we investigate the effect of the nonlocal parameter on the phase structure.

\end{document}